\documentclass[12pt,reqno]{amsart}

\usepackage[english]{babel}
\usepackage[utf8]{inputenc}
\usepackage{amsmath}
\usepackage{amssymb}
\usepackage{amsthm}
\usepackage{enumerate}
\usepackage{graphicx}
\usepackage{fullpage}

\newtheorem{thm}{Theorem}[section]
\newtheorem{lemma}[thm]{Lemma}
\newtheorem{cor}[thm]{Corollary}
\newtheorem{proposition}[thm]{Proposition}
\newtheorem{claim}[thm]{Claim}
\newtheorem{conjecture}[thm]{Conjecture}

\newcommand{\N}{\mathbb{N}}
\newcommand{\R}{\mathbb{R}}

\newcommand{\Case}[2]{\noindent {\bf Case #1:} \emph{#2}}

\newcommand{\mI}{\mathcal{I}}

\newcommand{\mP}{\mathcal{P}}

\newcommand{\sm}{\setminus}

\newcommand{\rarr}{\rightarrow}


\newcommand{\norm}[1]{\left\lVert#1\right\rVert}

\title{Maximum number of colourings. I. 4-chromatic graphs}

\author{Fiachra Knox and Bojan Mohar}

\date{\today}

\begin{document}

\begin{abstract}
It is proved that every connected graph $G$ on $n$ vertices with $\chi(G) \geq 4$ has at most $k(k-1)^{n-3}(k-2)(k-3)$ $k$-colourings for every $k \geq 4$.
Equality holds for some (and then for every) $k$ if and only if the graph is formed from $K_4$ by repeatedly adding leaves.
This confirms (a strengthening of) the $4$-chromatic case of a long-standing conjecture of Tomescu [Le nombre des graphes connexes $k$-chromatiques minimaux aux sommets \'{e}tiquet\'{e}s, C. R. Acad. Sci. Paris 273 (1971), 1124--1126]. Proof methods may be of independent interest. In particular, one of our auxiliary results about list-chromatic polynomials solves a recent conjecture of Brown, Erey, and Li.
\end{abstract}

\maketitle

\section{Introduction}

Let $x$ be a positive integer. By an \emph{$x$-colouring} we mean a function $f:V(G)\to\{1,\dots,x\}$ such that $f(u)\ne f(v)$ whenever $uv\in E(G)$. Note that permuting the colours used in a colouring gives a different colouring. The \emph{chromatic polynomial\/} $P_G(x)$ is the polynomial of degree $n=|V(G)|$ such that the value $P_G(x)$ is equal to the number of $x$-colourings of $G$ for every positive integer $x$.

The chromatic polynomial and its 2-variable version -- the Tutte polynomial -- play an important role in combinatorics. They have applications in theoretical physics, in knot theory, etc. However, many of very basic questions about chromatic polynomials remain unresolved and poorly understood. This paper touches one of such basic elusive problems and provides some new tools that may be of more general interest.

Chromatic polynomials of graphs were introduced by Birkhoff \cite{Bi12} in an attempt to attack the Four-Colour Problem by analytic means. He proved in \cite{Bi30} that $P_G(x)\ge x(x-1)(x-2)(x-3)^{|G|-3}$ for every planar graph $G$ and every $x\ge 5$. In \cite{BiLe46} he conjectured together with Lewis that the same holds for every $x\ge 4$. The Birkhoff-Lewis Conjecture has since been resolved for $x=4$ (the Four-Colour Theorem), but is still open for non-integral values between 4 and 5.

Maximizing the number of colourings within certain graph families has various applications. Wilf \cite{Wilf84} (see also \cite{BeWi85}) came to the problem of maximizing the number of colourings over the set of all graphs with the given number of vertices and edges in algorithm design and analysis. Linial \cite{Linial86} came to the same sort of questions from a computational complexity problem and proposed the same conjecture as Wilf which graphs ought to attain the maximum. In fact, Linial's question was related to maximizing the value $|P_G(-1)|$, which is known to be related to the number of acyclic orientations of $G$, but it turns out that the maximum occurs at the same set of graphs that maximize number of colourings. Partial solutions were obtained by Lazebnik et al. \cite{La90,La91,LaPiWo07}, Dohmen \cite{Do93,Do98}, Simonelli \cite{Si08} and others. Their results culminated in a breakthrough by Loh, Pikhurko, and Sudakov \cite{LoPiSu10} and a surprising follow-up by Ma and Naves \cite{MaNa15}, who confirmed the conjecture by Wilf and Linial asymptotically, but proved that it is false for infinitely many intermediate values.

In theoretical physics and in the theory of graph limits \cite{Lovasz-book}, there is continuing interest in maximizing the number of colourings and more general graph homomorphisms into a fixed target graph. We refer to Engbels and Galvin \cite{En15,EnGa17} for some recent developments.

It is easy to see that for every $k\ge1$ and every integer $x\ge k$, every connected $n$-vertex graph containing a clique of order $k$ has at most
\begin{equation}
   x\cdot(x-2)(x-3)\cdots (x-k+1)\cdot (x-1)^{n-k+1} = x^{\underline{k}}\,(x-1)^{n-k}
\label{eq:1}
\end{equation}
$x$-colourings, where $x^{\underline{k}} = x(x-1)(x-2)\cdots (x-k+1)$ is the falling factorial. This bound is attained for every $x$ if $G$ can be obtained from the $k$-clique $K_k$ by growing an arbitrary tree from each vertex of the clique. In 1971, Tomescu \cite{To71} conjectured that (\ref{eq:1}) is an upper bound for the number of $k$-colourings of any connected $k$-chromatic graph, whether it contains a $k$-clique or not, as long as $k\ge4$:

\begin{conjecture}[Tomescu, 1971]
Let\/ $G$ be a connected $k$-chromatic graph with $k\ge4$. Then $G$ has at most
\begin{equation}
   k!(k-1)^{|G|-k}
\label{eq:2}
\end{equation}
$k$-colourings. Moreover, the extremal graphs are precisely the graphs obtained from $K_k$ by adding trees rooted at each vertex of the clique.
\end{conjecture}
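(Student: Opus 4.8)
The plan is to prove the (slightly stronger) statement of the abstract by a suitable induction, say on $|V(G)|+|E(G)|$, simultaneously establishing the inequality $P_G(x)\le x(x-1)^{n-3}(x-2)(x-3)$ for every integer $x\ge 4$ — equivalently, the corresponding inequality of polynomials, obtained by letting $x\to\infty$ — and the assertion that equality at one such $x$ forces $G$ to be obtained from $K_4$ by repeatedly adding leaves. Two classical facts are used repeatedly: a connected graph $H$ on $m$ vertices has $P_H(x)\le x(x-1)^{m-1}$, with equality exactly when $H$ is a tree; and $P_G(x)=P_{G_1}(x)P_{G_2}(x)/x$ whenever $G=G_1\cup G_2$ with $|V(G_1)\cap V(G_2)|=1$. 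First I would dispose of the easy reductions: if $G$ has a vertex $v$ of degree $1$, then $P_G(x)=(x-1)P_{G-v}(x)$ and $\chi(G-v)=\chi(G)\ge 4$, so the inductive hypothesis for $G-v$ gives the claim (the extremal family being closed under adding a leaf); if $G$ has a cut vertex, split $G=G_1\cup G_2$ along it with $\chi(G_1)\ge 4$, bound $P_{G_1}$ by induction and $P_{G_2}$ by the tree bound, and note that equality forces $G_1$ to be a $K_4$ with trees added and $G_2$ a tree, hence $G$ of the required form. So we may assume $G$ is $2$-connected, in particular bridgeless.

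Next comes an edge-deletion step. Since $P_{G/e}(x)\ge 0$ for $x\ge 4$, we have $P_G(x)=P_{G-e}(x)-P_{G/e}(x)\le P_{G-e}(x)$, and $G-e$ is connected because $G$ is $2$-connected. If some edge $e$ has $\chi(G-e)\ge 4$ — which is automatic when $\chi(G)\ge 5$, and more generally whenever $G$ is not $4$-edge-critical — then the inductive hypothesis applied to $G-e$ (one fewer edge) gives the bound for $G$; moreover equality would force $G-e$ to be a $K_4$ with trees added, and since $G/e$ is then obtained from such a graph by identifying two non-adjacent vertices, which (a short check shows) keeps the chromatic number at most $4$, we get $P_{G/e}(x)>0$ and the inequality is in fact strict. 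Thus we may assume $\chi(G)=4$ and that $G$ is $4$-edge-critical. Such a $G$ is $2$-connected with $\delta(G)\ge 3$, and contains no $K_4$ unless $G=K_4$; it remains to prove the \emph{strict} inequality $P_G(x)<x(x-1)^{n-3}(x-2)(x-3)$ for all $x\ge 4$ when $G\ne K_4$.

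This critical case is the heart of the proof and, I expect, the main obstacle. The natural move is to pick a vertex $v$ of minimum degree and set $H:=G-v$, a connected graph on $n-1$ vertices with $\chi(H)=3$; criticality of $G$ forces that in every proper $3$-colouring of $H$ all three colours appear on $N(v)$, for otherwise $v$ could be coloured. Grouping the $x$-colourings of $H$ by the number $j$ of colours used on $N(v)$ gives $P_G(x)=\sum_{j}(x-j)N_j$, where each $N_j$ is a list-chromatic count for $H$ obtained by identifying or separating the vertices of $N(v)$; the structural constraint above, together with the edge-criticality of $G$ (which restricts both the $3$-colourings of $H$ and the adjacencies inside $N(v)$), is what must be leveraged to bound this weighted sum. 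The obstruction is that the trivial estimate $P_H(x)\le x(x-1)^{(n-1)-1}$ is far too weak — the true maximum of $P_H(x)$ over connected $3$-chromatic $H$ is governed by odd cycles, not by cliques — so one genuinely needs finer, list-sensitive bounds, and this is precisely where the auxiliary results on list-chromatic polynomials (including the one settling the conjecture of Brown, Erey, and Li) are brought to bear, organized around the two configurations ``Setup A'' and ``Setup B'' and presumably run as a list-chromatic strengthening of the outer induction. Carrying this through with the accompanying bookkeeping of extremal cases should yield the strict inequality, the only equality case in the entire argument being $G=K_4$.
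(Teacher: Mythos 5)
Your opening reductions are sound and broadly parallel the paper's: reduce to a $4$-edge-critical, $2$-connected graph of minimum degree at least~$3$, handle leaves and cut vertices by the usual multiplicativity of $P_G$, and delete non-critical edges via $P_G \le P_{G-e}$. (The paper phrases this slightly differently --- first reducing to a vertex-critical graph via Lemma~\ref{lem:subgraph} applied to a minimal vertex-critical subgraph, and then separately arguing edge-criticality using $2$-connectivity --- but the effect is the same.) Where the proposal breaks down is exactly where you acknowledge the ``heart of the proof'': having reduced to a $4$-edge-critical $G\ne K_4$ with $\delta(G)\ge 3$, you describe removing a minimum-degree vertex $v$, setting $H=G-v$, and writing $P_G(x)=\sum_j (x-j)N_j$ where $N_j$ counts colourings of $H$ using exactly $j$ colours on $N(v)$. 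That decomposition is reasonable in spirit but you never carry it out, and indeed you cannot, because as you yourself note the trivial bound $P_H(x)\le x(x-1)^{n-2}$ is dominated by odd cycles and is far too generous. The references to ``Setup~A'' and ``Setup~B'' point at nothing: those are declared in the preamble but never used in the paper, and no such setups appear in the actual argument. The sentence ``carrying this through \ldots\ should yield the strict inequality'' is precisely where a proof is required and none is given.

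What the paper actually does at this stage is quite different and is the substantive content of the result. Rather than deleting a single vertex, it identifies a short list of small configurations that must occur in a $4$-edge-critical graph of minimum degree~$3$, then colours $G$ \emph{except} for such a configuration and bounds the number of extensions using the explicit list-chromatic polynomial bounds of Propositions~\ref{prop:C3,C5,K23} and~\ref{prop:C4 plus leaf}. Concretely, after a $2$-induced-subgraph bound (Claim~\ref{claim:twoInduced}) the argument splits into: $G$ small; $G$ containing $K_4^-$ (handled by comparing $G'=G/x_1x_2$ and $G^*=G+x_1x_2$); $G$ containing two disjoint triangles; $G$ containing exactly one triangle (subcases according to whether the residual configuration $F_2$ is $C_4$ plus a leaf, $K_{2,3}$, $P_5$, or $C_5$); $G$ triangle-free with a $2$-induced $C_5$, $C_7$, or $P_6$; and the residual case built around a shortest odd cycle plus a vertex with two neighbours on it, reduced to a $3$-element independent set $F_3$. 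Each of these requires its own calculation. None of this structure is in your proposal, and the inclusion--exclusion and Kempe-compression machinery (Lemma~\ref{lem:compression}, Corollary~\ref{cor:list colourings}) that makes the list-colouring bounds precise is invoked only nominally. In short, the preliminary reductions are correct but standard, and the step that actually establishes $P_G(x)<x^{\underline 4}(x-1)^{n-4}$ for $4$-edge-critical $G\ne K_4$ is missing.
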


The requirement that $k\ne3$ is necessary since odd cycles (and odd cycles with added trees) have more colourings than specified by (\ref{eq:1}); see \cite{To_DM72,To_JGT94,To_DM97} for more details; and in the case of bipartite graphs ($k=2$), any connected bipartite graph attains the bound.

Tomescu proved \cite{To_JGT90} that all 4-chromatic planar graphs satisfy his conjecture. For the same class of graphs, he proved a stronger conclusion for the number or $x$-colourings (for every $x\ge4$), where the bound of the conjecture is replaced by (\ref{eq:1}). Apart from this achievement, only sporadic results are known \cite{BE15,BEL16,Erey16}. We refer to \cite[Chapter~15]{DKT_Book_05} for additional overview of the results in this area.

In this paper we prove the Tomescu Conjecture for $k=4$ and its extended version for $x$-colourings.
Our main result states the following.

\begin{thm}
\label{thm:main}
Let $G$ be a connected 4-chromatic graph and $x\ge 4$ be an integer. Then
\begin{equation}
   P_G(x) \le x^{\underline{4}}(x-1)^{n-4}.
\label{eq:3}
\end{equation}
Moreover, equality holds for some integer $x\ge 4$ if and only if $G$ can be obtained from $K_4$ by adding a tree on each vertex of $K_4$ (in which case equality holds for every $x \in \R$).
\end{thm}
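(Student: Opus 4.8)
The plan is to prove the inequality by induction on $n = |G|$, using deletion–contraction together with the observation that a $4$-chromatic graph cannot be a tree or a forest. The base case is $n = 4$, where the only connected $4$-chromatic graph is $K_4$, for which $P_{K_4}(x) = x^{\underline 4}$ with equality in~\eqref{eq:3}. For the inductive step, I would first dispose of the easy case: if $G$ has a cut vertex or is not $2$-connected, then $G$ is a union of blocks glued along cut vertices; at least one block $B$ is $4$-chromatic, and the chromatic polynomial factorizes as a product of the block polynomials divided by powers of $x$. Applying induction to $B$ and the trivial bound $P_H(x) \le x(x-1)^{|H|-1}$ to the remaining pieces (which are connected) yields~\eqref{eq:3}, and tracing the equality case forces every non-$4$-chromatic block to be a tree and the $4$-chromatic block to be $K_4$, i.e.\ $G$ is $K_4$ with trees attached. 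So we may assume $G$ is $2$-connected.

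Now suppose $G$ is $2$-connected and $4$-chromatic with $n \ge 5$. Pick an edge $e = uv$. Deletion–contraction gives $P_G(x) = P_{G-e}(x) - P_{G/e}(x)$. The contraction $G/e$ is connected on $n-1$ vertices; if it is still $4$-chromatic we bound $P_{G/e}(x) \le x^{\underline 4}(x-1)^{n-5}$ by induction, but this pushes in the wrong direction, so instead I would aim to choose $e$ so that $G/e$ has chromatic number $\ge 4$ and use this as a \emph{lower} bound only in a more refined accounting — the cleaner route is to choose a vertex $w$ of small degree and delete it. Deleting a vertex $w$ of degree $d$, if $G - w$ is still connected and $4$-chromatic, gives by induction and a list-colouring argument
\[
   P_G(x) \le (x - 1)\, P_{G-w}(x) \le (x-1)\, x^{\underline 4}(x-1)^{n-5} = x^{\underline 4}(x-1)^{n-4}
\]
provided each colour extension to $w$ is restricted to at most $x - 1$ choices, which holds once $w$ has a neighbour. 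This requires that $G$ has a vertex $w$ whose removal keeps the graph connected and $4$-chromatic; the existence of such a $w$ is the heart of the matter and is not always available (e.g.\ if every vertex lies in all the "obstructions" to $3$-colourability).

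The main obstacle is therefore the case where $G$ is \emph{critically} $4$-chromatic, i.e.\ every proper subgraph is $3$-colourable, so no vertex deletion preserves $4$-chromaticity and the naive induction fails. Here I expect the proof to invoke the structure of $4$-critical graphs: such $G$ has minimum degree $\ge 3$, and more importantly one can use the list-chromatic polynomial machinery alluded to in the abstract. The idea is to root the induction not on $P_G$ alone but on a stronger list-colouring statement: for every assignment of lists of sizes summing appropriately, the number of list-colourings is bounded, so that deleting a low-degree vertex from a $4$-critical graph leaves a graph that, while only $3$-chromatic, carries enough list-colouring slack (because $w$'s neighbourhood forbids at least two colours at some vertex) to still yield the factor-$(x-1)$ saving. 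Concretely, in a $4$-critical graph a minimum-degree vertex $w$ of degree $3$ has neighbours $w_1, w_2, w_3$, and in $G - w$ the graph is $3$-colourable; one shows that the number of $x$-colourings of $G - w$ in which $w_1, w_2, w_3$ do not all receive distinct colours is small enough that $P_G(x) = \sum (\text{extensions}) \le x^{\underline 4}(x-1)^{n-4}$, using the auxiliary list-chromatic bound (the Brown–Erey–Li conjecture resolved earlier in the paper) applied to $G - w$ with lists encoding the constraint at $w_1, w_2, w_3$. The equality analysis then propagates: equality at size $n$ forces equality at size $n-1$, which by induction forces the reduced graph to be $K_4$-with-trees, and checking that the only way to reattach $w$ keeping $4$-chromaticity is to create a new $K_4$ (or attach a leaf) completes the characterization.
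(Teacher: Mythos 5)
Your outer reduction (base case $n=4$; then dispose of the non-critical case by induction) is in the spirit of the paper's opening step, which reduces directly to a minimal $4$-vertex-critical subgraph $F$ via Lemma~\ref{lem:subgraph} rather than through the block tree, but these two variants of the easy reduction are of comparable difficulty. The substance of the theorem, however, is the $4$-critical case, and there your plan has a genuine gap rather than a mere omission of detail. You propose to delete a degree-$3$ vertex $w$ with neighbours $w_1,w_2,w_3$, observe that $G-w$ is $3$-colourable, and then control the number of colourings of $G-w$ with few colours on $\{w_1,w_2,w_3\}$ via the Brown--Erey--Li bound. This does not close for two reasons. First, Corollary~\ref{cor:list colourings} (the resolution of Brown--Erey--Li) is a statement about \emph{bipartite} graphs, and $G-w$ is $3$-chromatic, so it cannot be invoked on $G-w$. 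Second, writing $P_G(x) = (x-1)P_{G-w}(x) - A_2 - 2A_3$ (where $A_i$ is the number of colourings of $G-w$ using exactly $i$ colours on $N(w)$), the slack between $(x-1)P_{G-w}(x)$ and the target $x^{\underline{4}}(x-1)^{n-4}$ can be of order $x(x-1)^{n-1}$, since a priori $P_{G-w}(x)$ is only bounded by $x(x-1)^{n-2}$; you would need $A_2+2A_3$ to be comparably large for an arbitrary $4$-critical $G$, and no argument for such a lower bound is given.

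The paper's actual route through the critical case has a rather different shape. Having reduced to a $2$-connected, vertex- and edge-critical $G$ with $\delta(G)\ge 3$, it runs a structural case analysis on which small subgraphs $G$ contains ($K_4^-$, one or two triangles, a $2$-induced $C_5$, $C_7$ or $P_6$, the triangle-free case with a long odd girth cycle, and so on). In each case it colours the bulk of $G$ greedily, saving a carefully chosen fixed small subgraph $F_2$ of order five or six (a $C_5$, a $P_5$, a $K_{2,3}$, or a $C_4$ with a pendant vertex) for last, and bounds the number of extensions to $F_2$ by exact list chromatic polynomials for these specific small graphs (Propositions~\ref{prop:C3,C5,K23} and~\ref{prop:C4 plus leaf}, proved by inclusion--exclusion; Corollary~\ref{cor:list colourings} is used there only to normalise the forbidden lists on the bipartite members such as paths, $K_{2,3}$ and $C_4$ plus a leaf). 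The required numerical savings come from the sharp constants in these fixed-size list chromatic polynomials, not from a global list-colouring bound applied to $G-w$. You would need to supply something functionally equivalent to this case analysis, and also to track the equality characterisation through the critical case, to have a complete proof.
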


In the core of our proofs, we use novel counting arguments about precolouring extensions. Formally, we introduce the \emph{list chromatic polynomials} which count the number of extensions. One of our results considers which list assignments allow for the largest number of extensions. Our Corollary \ref{cor:list colourings} describes the extremal cases completely in the case of bipartite graphs. A special case of this kind of problem where one asks about precolouring extensions with one colour used in the neighbourhood of each uncoloured vertex was treated in \cite{BEL16}. Our result in particular solves a conjecture of Brown, Erey, and Li from \cite[Conjecture~3.1]{BEL16}.

In a forthcoming work \cite{AKM2}, we prove the Tomescu conjecture for $k=5$, but the proof becomes more complicated and needs a support of computer calculations. Finally, we treat the general case in \cite{KM3}.

\section{Preliminaries}

We will use standard graph theory terminology and notation as used by Diestel \cite{Diestel2010} or Bondy and Murty \cite{BondyMurty2008}. In particular, we use $n=|G|=|V(G)|$ to denote the order of $G$. The minimum vertex degree of $G$ is denoted by $\delta(G)$. By $N(v)$ we denote the set of neighbours of a vertex $v$, and we use $\chi(G)$ for the chromatic number. We say $G$ is \emph{$k$-chromatic} if $\chi(G)=k$. The graph is \emph{vertex-critical} (\emph{edge-critical}) if the removal of any vertex (edge) decreases the chromatic number. We will frequently use the fact that identifying non-adjacent vertices of a graph $G$ results in a graph $G'$ with $\chi(G') \geq \chi(G)$.

For a vertex-set $U\subseteq V(G)$, $G[U]$ is the subgraph of $G$ induced by $U$. A \emph{$2$-induced subgraph} in a graph $G$ is an induced subgraph $F$ such that $|N(v) \cap V(F)|~\leq 1$ for every $v \in V(G) \sm V(F)$.

If $x$ is a positive integer, we let $[x] = \{1,2,\dots,x\}$.

Let $G$ be a graph and $P_G(x)$ its chromatic polynomial. Throughout the paper we will use the related indeterminate $y=x-1$ and the \emph{shifted chromatic polynomial}:
$$
   Q_G(y) = P_G(y+1).
$$

We say that $G$ is obtained from a graph $H$ by \emph{appending a tree} $T$ if $H$ and $T$ are disjoint and $G$ is formed by identifying a vertex of $H$ with a vertex of $T$.

\begin{lemma}
\label{lem:subgraph}
Let $F$ be a graph and let $G$ be a connected graph containing $F$ as a subgraph. Then
\begin{equation}
   Q_G(y) \leq y^{|G| - |F|}Q_F(y) \label{eq:Lemma2.1}
\end{equation}
for any $y \in \N$. Moreover, if $Q_F(y) > 0$, $y\ge2$, and $F$ does not have two vertices which have the same colour in every $(y+1)$-colouring, then equality holds in $(\ref{eq:Lemma2.1})$ if and only if $Q_F(y) = Q_{G[V(F)]}(y)$ and $G$ is formed from $F$ by appending vertex-disjoint trees to the vertices of $F$.
\end{lemma}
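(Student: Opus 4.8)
The plan is to proceed by induction on $|G| - |F|$, building up $G$ from $F$ one vertex at a time. The base case $|G| = |F|$ is trivial (with equality). For the inductive step, since $G$ is connected and contains $F$, if $G \ne F$ there is a vertex $v \in V(G) \sm V(F)$ with at least one neighbour in $G' := G - v$, where $G'$ still contains $F$ and is still connected (choose $v$ to be a leaf of a spanning tree of $G$ rooted inside $V(F)$, or more carefully a vertex whose deletion keeps $G'$ connected and containing $F$; such a $v$ exists by taking a spanning tree $S$ of $G$, noting $S$ has a leaf $v \notin V(F)$ as long as $V(G) \ne V(F)$, wait — we need $v \notin V(F)$, so pick $S$ to contain a spanning tree of the subgraph on $V(F)$ and then a leaf of $S$ outside $V(F)$). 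Then every $(y+1)$-colouring of $G$ restricts to a $(y+1)$-colouring of $G'$, and each colouring of $G'$ extends to at most $y$ colourings of $G$ (since $v$ has a neighbour whose colour is forbidden), giving $Q_G(y) \le y\, Q_{G'}(y) \le y \cdot y^{|G'| - |F|} Q_F(y) = y^{|G| - |F|} Q_F(y)$ by induction.

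For the equality characterization, assume $Q_F(y) > 0$, $y \ge 2$, and $F$ has no two vertices monochromatic in all $(y+1)$-colourings. Equality in $(\ref{eq:Lemma2.1})$ forces equality at \emph{every} step of the induction, so I need two things at each step: (i) $Q_G(y) = y\, Q_{G'}(y)$, meaning \emph{every} $(y+1)$-colouring of $G'$ extends in exactly $y$ ways, which is equivalent to saying $v$ has exactly one colour forbidden, i.e. $N_G(v) \sm \{v\}$ — the neighbours of $v$ in $G$ — all receive the same colour in every colouring of $G'$; and (ii) the inductive hypothesis equality $Q_{G'}(y) = y^{|G'|-|F|} Q_F(y)$, which by induction means $G'$ is formed from $F$ by appending vertex-disjoint trees and $Q_F(y) = Q_{G[V(F)]}(y)$. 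The point is to deduce from (i) that $v$ has exactly one neighbour in $G'$ (so that appending $v$ keeps the structure a forest of trees hanging off $F$). Suppose $v$ had two neighbours $a, b$ in $G'$. By (i) they are monochromatic in every colouring of $G'$. Now I must push this back to $F$: since $G'$ is $F$ with trees appended, and $G'[V(F)] = G[V(F)]$ has $Q_{G[V(F)]}(y) = Q_F(y) > 0$, a colouring of $F$ extends to $G'$, and along a tree path the colour can be varied — here I use $y \ge 2$ to get enough freedom. The conclusion should be that either $a, b \in V(F)$ are monochromatic in all colourings of $F$ (contradicting the hypothesis), or one of them lies on an appended tree and its colour can be changed independently while fixing the other (contradicting (i)). Handling the case where $a$ or $b$ is itself the attachment vertex, or where the trees from $F$ interact, needs the observation that in a tree with a fixed colour at the root, every non-root vertex can take at least two colours, and pairs of vertices in distinct trees (or a tree vertex and an $F$ vertex off that tree) are independently colourable when $y \ge 2$.

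The main obstacle I anticipate is precisely this last deduction: converting ``$v$'s two neighbours are monochromatic in all colourings of $G'$'' into a contradiction with the hypothesis on $F$. The subtlety is that $G'$ may already contain appended trees, so ``monochromatic in $G'$'' is weaker than ``monochromatic in $F$'' a priori, and one must use $Q_F(y) = Q_{G[V(F)]}(y)$ together with $y \ge 2$ to transfer the monochromaticity to a pair of vertices of $F$ (or to derive that $v$ has only one neighbour after all). A clean way to organize this is: first establish the structural claim that in any graph formed from $F$ by appending disjoint trees, with $y \ge 2$ and $Q_F(y) > 0$, two vertices are monochromatic in every $(y+1)$-colouring if and only if they are equal or they are joined by a path all of whose internal vertices have degree $2$ and — no, that's not quite it either; rather, one shows the only forced monochromatic pairs are those already forced in $F$, unless $F$ itself already had such a pair. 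Granting the hypothesis rules the latter out, we get that $v$ cannot have two neighbours in $G'$, completing the induction. I would also double-check that $Q_F(y) = Q_{G[V(F)]}(y)$ is genuinely needed and is implied: equality in $(\ref{eq:Lemma2.1})$ applied with $G$ replaced by the connected graph $G[V(F)] \supseteq F$ (if $G[V(F)]$ is connected — it is, since it contains connected $F$ on the same vertex set... actually $G[V(F)]$ need not be connected in general, but it contains $F$ which spans it and is connected) gives $Q_{G[V(F)]}(y) \le Q_F(y)$, and the reverse inequality $Q_{G[V(F)]}(y) \le Q_F(y)$... wait, adding edges only decreases the count, so $Q_{G[V(F)]}(y) \le Q_F(y)$ always; the equality case then forces $G[V(F)] = F$ up to colouring-equivalence, i.e. $Q_F(y) = Q_{G[V(F)]}(y)$ exactly as stated.
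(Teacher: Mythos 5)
Your inequality argument (peel off a leaf of a spanning tree rooted in $V(F)$, giving at most $y$ choices per removed vertex) is essentially the paper's direct count of extensions, and it is fine. The gap is in the equality characterization, and you have correctly located it yourself: your induction reduces everything to the claim that if $G'$ is formed from $F$ by appending vertex-disjoint trees, $y\ge 2$, $Q_F(y)>0$, and $F$ has no pair of vertices monochromatic in every $(y+1)$-colouring, then no two distinct vertices of $G'$ are monochromatic in every $(y+1)$-colouring of $G'$. You sketch why this should hold (propagate colour freedom down the trees using $y\ge 2$) but the case analysis is not carried out, and phrases like ``the conclusion should be'' and ``no, that's not quite it either'' are not a proof. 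Without that structural claim, you cannot pass from ``$v$'s two neighbours $a,b$ are always monochromatic in $G'$'' to a contradiction, so the induction does not close. (Also, the base case is not literally ``trivial with equality'': $G[V(F)]$ may have more edges than $F$, and equality there is exactly the condition $Q_F(y)=Q_{G[V(F)]}(y)$; you notice this at the end, but it should be folded in cleanly.)

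The paper avoids your global structural claim by arguing locally along the fixed ordering. It takes the first vertex $v\in U$ with two predecessors $v',v''$. If $v',v''\in V(F)$, the hypothesis on $F$ supplies a colouring $c$ of $F$ with $c(v')\ne c(v'')$, and $Q_F(y)=Q_{G[V(F)]}(y)$ guarantees this $c$ extends; then $v$ has at most $y-1$ choices. If $v'\notin V(F)$ and is the later of the two, then at $v'$ either there are already at most $y-1$ choices, or (using $y\ge 2$) at least one of its $y$ available colours avoids $c(v'')$, and that choice leaves $v$ with at most $y-1$ options; a short count shows the number of extensions of $c$ drops strictly below $y^{|G|-|F|}$. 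This per-ordering, per-colouring argument never needs to classify all forced monochromatic pairs in $F$-plus-trees. If you want to complete your version, the cleanest fix is to prove your auxiliary claim by exactly this local trick rather than by the more global characterization you gesture at.
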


\proof It suffices to show that there are at most $y^{|G| - |F|}$ ways to extend an arbitrary colouring $c$ of $F$ to a colouring of $G$.
To see this, we first observe that, since $G$ is connected, there is an ordering of $U = V(G) \sm V(F)$ such that every vertex $v\in U$ is adjacent either to a vertex of $F$ or to an earlier vertex in $U$.
We remark that either some vertex in $U$ has two neighbours, each of which is either in $V(F)$ or earlier in the ordering, or $G - V(F)$ is a forest and for every component of the forest, there is precisely one edge joining it to $F$. In the latter case, $G$ can be obtained from $F$ by appending vertex-disjoint trees to the vertices of $F$.

Now we can obtain any extension of any colouring $c$ of $F$ to $G$ by colouring the vertices of $U$ in the order discussed above;
then there are at most $y$ colours available at each vertex and thus at most $y^{|G| - |F|}$ extensions in total. This proves (\ref{eq:Lemma2.1}) and also justifies the last statement of the lemma if $Q_F(y) > Q_{G[V(F)]}(y)$.

To prove the `moreover' part of the lemma, suppose that there is a vertex $v \in U$ with two neighbours $v'$ and $v''$, each of which is either in $V(F)$ or earlier in the ordering. From the end of the previous paragraph, we may also assume that $Q_F(y) = Q_{G[V(F)]}(y)$. We claim that in this case the inequality is strict.
If $v', v'' \in V(F)$, consider extensions of a colouring $c$ of $F$ in which $c(v) \neq c(v')$;
then there are at most $y-1$ ways to colour $v$ and the inequality is strict.
So we may assume without loss of generality that $v' \notin V(F)$.
Now when extending $c$ to $G$, either there are at most $y-1$ ways to colour $v'$, or there is a choice of colour for $v'$ (since $y\ge2$) such that there are at most $y-1$ ways to colour $v$.
In either case the inequality is strict. This proves the `moreover' part of the lemma and completes the proof.
\endproof

The following generalization of Lemma \ref{lem:subgraph} will also be used.

\begin{lemma}
\label{lem:decomposition}
Let $H_1,\dots,H_r$ $(r\ge1)$ be subgraphs of a connected graph $G$ such that for each $i=1,\dots,r$ at most one vertex of $H_i$ is contained in $\cup_{j\ne i}H_j$. Let $n=|G|$, $n'=\sum_{i=1}^r |H_i|$. Then
$$
    Q_G(y) \le y^{n-n'} \left(\frac{y}{y+1}\right)^{r-1}\ \prod_{i=1}^r Q_{H_i}(y).
$$
\end{lemma}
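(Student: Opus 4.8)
The plan is to induct on $r$. The base case $r=1$ is exactly Lemma~\ref{lem:subgraph} applied with $F=H_1$, since $\bigl(\tfrac{y}{y+1}\bigr)^{0}=1$. For the inductive step I would first record the overlap structure. Define an auxiliary graph $\mathcal{H}$ on $\{1,\dots,r\}$ by joining $i$ and $j$ exactly when $V(H_i)\cap V(H_j)\neq\emptyset$. The hypothesis that at most one vertex of $H_i$ lies in $\bigcup_{j\neq i}H_j$ forces the following: if $i$ is joined to distinct $j$ and $k$, then $V(H_i)\cap V(H_j)$ and $V(H_i)\cap V(H_k)$ are the same singleton, so $j$ and $k$ are joined too. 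Hence $\mathcal{H}$ has no induced $P_3$, i.e. it is a disjoint union of cliques; moreover inside each component $C$ all the $H_i$ with $i\in C$ pass through one common vertex $w_C$, meet pairwise in exactly $\{w_C\}$, and have no edges between the sets $V(H_i)\setminus\{w_C\}$. I would then split on whether $\mathcal{H}$ has an edge.

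If $\mathcal{H}$ has a component of size at least $2$ --- equivalently, its number of components $t$ satisfies $t<r$ --- set $\tilde H_C=\bigcup_{i\in C}H_i$ for each component $C$. These are pairwise vertex-disjoint subgraphs of the connected graph $G$, so the induction hypothesis applies to them (with $t<r$). The $K_1$-gluing identity gives $Q_{\tilde H_C}(y)=(y+1)^{-(|C|-1)}\prod_{i\in C}Q_{H_i}(y)$ and $|\tilde H_C|=\sum_{i\in C}|H_i|-(|C|-1)$; substituting these and using $\sum_C(|C|-1)=r-t$, the powers of $y$, of $\tfrac{y}{y+1}$, and of $(y+1)$ combine to give exactly the claimed bound.

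If $\mathcal{H}$ has no edge, i.e. $H_1,\dots,H_r$ are pairwise vertex-disjoint, I would use connectivity of $G$ to find indices $i\neq j$ and a path $p_0p_1\cdots p_k$ in $G$ with $p_0\in V(H_i)$, $p_k\in V(H_j)$, and all internal vertices outside $\bigcup_\ell V(H_\ell)$ (lift a shortest path between two of the vertices obtained by contracting each $V(H_\ell)$ to a point). Replace $H_i$ and $H_j$ by $H'=H_i\cup H_j\cup\{p_0p_1,\dots,p_{k-1}p_k\}$; the resulting list of $r-1$ subgraphs is again pairwise vertex-disjoint, so the induction hypothesis applies. A colour-symmetry count --- the number of $(y+1)$-colourings of $H_i$ in which $p_0$ gets a prescribed colour is $Q_{H_i}(y)/(y+1)$, and summing the number of ways to fill in $p_1,\dots,p_{k-1}$ over all colours of the far endpoint $p_k$ gives $y^k$ --- yields the identity $Q_{H'}(y)=\tfrac{y^k}{y+1}Q_{H_i}(y)Q_{H_j}(y)$. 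Since $|H'|=|H_i|+|H_j|+(k-1)$, substituting into the induction bound again makes the exponents telescope to the desired inequality.

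The only genuine obstacle I anticipate is the pairwise-disjoint case: applying Lemma~\ref{lem:subgraph} directly to $\bigcup_i H_i$ is too weak, since it pays a factor $y$ for each vertex on a path joining two components of $\bigcup_i H_i$ with no compensating $\tfrac1{y+1}$, so one really must absorb such a connecting path into one of the $H_i$, and the accounting for the extra $k-1$ path vertices is the step where an exponent slip is easiest. The structural description of $\mathcal{H}$ is routine but load-bearing and should be stated carefully. (One should also assume each $H_i$ is nonempty: with $G=K_1$, $H_1=K_1$, $H_2=\emptyset$ one has $Q_G(y)=y+1>y=y^{0}\bigl(\tfrac{y}{y+1}\bigr)^{1}(y+1)\cdot 1$, so the inequality can otherwise fail.)
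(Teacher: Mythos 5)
Your proof is correct, and it takes a genuinely different route from the paper's. The paper first pads the family with singleton subgraphs so that $\bigcup_i V(H_i)=V(G)$ (checking that this leaves the right-hand side unchanged), then reorders the $H_i$ in a BFS-like fashion so that each $H_i$ with $i\ge 2$ has a vertex $v_i$ in, or adjacent to, $G_{i-1}=\bigcup_{j<i}H_j$, and finally does a one-pass count: since the colourings of $H_i$ split into $y+1$ equal classes by the colour of $v_i$, each new $H_i$ contributes a factor $Q_{H_i}(y)/(y+1)$ (if $v_i\in G_{i-1}$) or $yQ_{H_i}(y)/(y+1)$ (if $v_i$ is merely adjacent to $G_{i-1}$), and the exponents telescope. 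Your argument instead inducts on $r$, organises the overlaps via the cluster graph $\mathcal{H}$ (whose structure you correctly identify as a disjoint union of cliques, each with a single hub vertex $w_C$), and applies one of two merging moves: the $K_1$-gluing identity $Q_{\tilde H_C}(y)=(y+1)^{-(|C|-1)}\prod_{i\in C}Q_{H_i}(y)$ when $\mathcal{H}$ has an edge, and the path-splicing identity $Q_{H'}(y)=\tfrac{y^k}{y+1}Q_{H_i}(y)Q_{H_j}(y)$ when the $H_i$ are pairwise disjoint. Both merging identities and the exponent bookkeeping check out, and your remark that the shortest connecting path (taken over all pairs $i\ne j$ in the contracted graph) has no internal vertices in any $H_\ell$ is exactly the point that makes the splicing step legitimate. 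The same colour-class-partition idea underlies both proofs, but the paper's singleton-padding trick lets it treat the ``disjoint but adjacent'' and ``path through uncovered vertices'' situations uniformly in a single sweep, at the cost of a slightly opaque bookkeeping paragraph; your version is longer and has more moving parts (two cases, an auxiliary cluster graph, two distinct gluing identities) but makes the algebra at each step completely explicit. Your closing observation that the statement requires each $H_i$ to be nonempty (with $G=K_1$, $H_1=K_1$, $H_2=\emptyset$ as a counterexample) is a legitimate and worthwhile caveat that the paper leaves implicit.
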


\begin{proof}
We may assume that every vertex of $G$ is in some subgraph $H_i$.
If not, then we just add particular vertices as new subgraphs.
This reduces the exponent in $y^{n-n'}$ by one, but also increases $r$ by $1$ and adds a factor of $(y+1)$ in the product,
resulting in no overall change to the bound.

First, we reorder the subgraphs in such a way that for each $i=2,\dots,r$, the subgraph $H_i$ contains a vertex $v_i$ that is either contained in the union $G_{i-1} = \cup_{j=1}^{i-1} H_j$ of previous subgraphs or is adjacent to a vertex in $G_{i-1}$. Since $G$ is connected and subgraphs contain all vertices of $G$, such an ordering exists. Let $u_i$ be a vertex in $G_{i-1}$ that is equal to $v_i$ (if $H_i\cap G_{i-1}\ne \emptyset$) or adjacent to $v_i$ (if $H_i\cap G_{i-1} = \emptyset$).

For the proof, it suffices to prove that any $(y+1)$-colouring of $G_{i-1}$ can be extended to $G_i$ in at most $Q_{H_i}(y)/(y+1)$ ways if $H_i$ intersects $G_{i-1}$ and in at most $yQ_{H_i}(y)/(y+1)$ ways if $H_i$ is disjoint from $G_{i-1}$. To see this observe that the colourings of $H_i$ can be partitioned into $y+1$ classes, all of the same cardinality $Q_{H_i}(y)/(y+1)$, where each class contains the colourings for which the colour of $v_i$ is the same. Now, if $v_i=u_i$, then any colouring of $G_{i-1}$ can only be extended by the colouring class for which the colour of $v_i$ is as in the colouring of $G_{i-1}$. In the case when the graphs are disjoint, the colour of $u_i$ cannot be used for $v_i$, thus at least one out of $y+1$ classes cannot be used for extensions. This proves our claim.
\end{proof}

\begin{lemma}
\label{lem:paths}
Let $G$ be a path of length $r$ with endpoints $v_1$ and $v_2$, and let $c_1, c_2 \in [y+1]$.
Then the number of $(y+1)$-colourings of $G$ in which $v_i$ has colour $c_i$ for $i = 1, 2$ is
$(y^r + (-1)^{r+1})/(y+1)$ if $c_1 \neq c_2$ and $(y^{r} + (-1)^{r}y)/(y+1)$ if $c_1 = c_2$.
\end{lemma}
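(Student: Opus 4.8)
The plan is to prove the formula by induction on the length $r$, carrying the two cases simultaneously. For fixed colours $c_1 \ne c_2$ let $a_r$ be the number of $(y+1)$-colourings of a path of length $r$ whose endpoints receive $c_1$ and $c_2$, and for a fixed colour let $b_r$ be the corresponding number when both endpoints receive that same colour. By symmetry of the colour set neither quantity depends on the particular colours chosen, only on whether the two endpoint colours agree, so the statement is exactly that $a_r = (y^r + (-1)^{r+1})/(y+1)$ and $b_r = (y^r + (-1)^r y)/(y+1)$.

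For the base case $r = 1$ the path is a single edge, so $a_1 = 1$ and $b_1 = 0$, which agrees with the two formulas. For the inductive step I would write the path of length $r+1$ as $w_0 - w_1 - \cdots - w_{r+1}$ with prescribed endpoints $w_0$ and $w_{r+1}$, and condition on the colour of $w_r$: the restriction of any colouring to $w_0 \cdots w_r$ is an arbitrary $(y+1)$-colouring of a path of length $r$ with endpoints $w_0$ and $w_r$, and the only constraint from $w_{r+1}$ is $w_r \ne w_{r+1}$. This gives $b_{r+1} = y\,a_r$ (here $w_{r+1}$ is forced to equal the colour of $w_0$, and $w_r$ ranges over the $y$ colours distinct from it, each contributing $a_r$) and $a_{r+1} = b_r + (y-1)a_r$ (either $w_r$ has the colour of $w_0$, contributing $b_r$, or it is one of the $y-1$ colours avoiding both endpoint colours, contributing $a_r$ each). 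Substituting the inductive hypotheses into these recurrences and simplifying — the terms $(-1)^r y$ and $(-1)^{r+1} y$ in the numerator of $a_{r+1}$ cancel — yields the claimed closed forms at $r+1$.

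An alternative, almost bookkeeping-free route would invoke the classical fact that the number of $k$-colourings of the cycle $C_{r+1}$ is $(k-1)^{r+1} + (-1)^{r+1}(k-1)$: with $k = y+1$ these are precisely the $(y+1)$-colourings of a path of length $r$ whose endpoints differ, and since there are $(y+1)y$ ordered pairs of distinct colours, each giving the same count by symmetry, one reads off $a_r$ directly; then $b_r$ follows from the identity $b_r + y\,a_r = y^r$, which just says that fixing the colour of one endpoint of the path leaves $y^r$ colourings, split according to whether the other endpoint agrees. I do not expect a genuine obstacle here; the only delicate point is the sign bookkeeping in the parity of $r$, and running the induction on the pair $(a_r, b_r)$ keeps that under control.
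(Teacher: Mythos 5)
Your induction on $r$, conditioning on the colour of the penultimate vertex and tracking the pair $(a_r,b_r)$ via the recurrences $a_{r+1}=b_r+(y-1)a_r$ and $b_{r+1}=ya_r$, is exactly the argument the paper uses (the paper just phrases it as going from $r-1$ to $r$). The proof is correct; the alternative route via the cycle chromatic polynomial is a nice aside but not needed.
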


\proof The proof is by induction on $r$. The cases when $r=0$ or $r=1$ are trivial.
Suppose that $r\ge2$ and that the lemma holds for $r-1$.
Let $v'_2$ be the unique neighbour of $v_2$ in $G$.
If $c_1 \neq c_2$, then by considering the possible colours of $v'_2$ we obtain that the number of $(y+1)$-colourings in which $v_i$ has colour $c_i$ for $i = 1,2$ is
$$
 \frac{y^{r-1} + (-1)^{r-1}y}{y+1} + \frac{(y-1)(y^{r-1} + (-1)^{r})}{y+1} = \frac{y^r + (-1)^{r+1}}{y+1}.
$$
If $c_1 = c_2$, then a similar calculation yields
$y(y^{r-1} + (-1)^{r})/(y+1) = (y^{r} + (-1)^{r}y)/(y+1)$.
\endproof

From this lemma, it is easy to derive well-known chromatic polynomials for cycles.

\begin{lemma}
\label{lem:chr_poly_cycles}
The chromatic polynomial of the $n$-cycle $C_n$ is equal to
$$
   Q_{C_n}(y) = \left\{
                  \begin{array}{ll}
                    y^n - y, & \hbox{$n$ is odd;} \\
                    y^n + y, & \hbox{$n$ is even.}
                  \end{array}
                \right.
$$
\end{lemma}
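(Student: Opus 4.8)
The plan is to realise $C_n$ as a path of length $n-1$ together with one extra edge joining its two endpoints, and then to read off the count from Lemma~\ref{lem:paths}. Concretely, write $V(C_n) = \{v_1,\dots,v_n\}$ with edges $v_iv_{i+1}$ (indices taken cyclically); deleting the edge $v_nv_1$ leaves a path $G'$ of length $n-1$ with endpoints $v_1$ and $v_n$. A $(y+1)$-colouring of $C_n$ is precisely a $(y+1)$-colouring of $G'$ in which $v_1$ and $v_n$ receive distinct colours, since the only constraint of $C_n$ not already imposed by $G'$ is the edge $v_1v_n$.

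Next I would count. For each ordered pair $(c_1,c_n)\in[y+1]^2$ with $c_1\neq c_n$, Lemma~\ref{lem:paths} applied with $r=n-1$ gives exactly $(y^{n-1}+(-1)^{(n-1)+1})/(y+1)=(y^{n-1}+(-1)^n)/(y+1)$ colourings of $G'$ with $v_1\mapsto c_1$ and $v_n\mapsto c_n$; note this value does not depend on the particular pair. Since there are exactly $(y+1)y$ such ordered pairs, summing over them yields
\[
   Q_{C_n}(y) \;=\; (y+1)\,y\cdot\frac{y^{n-1}+(-1)^n}{y+1} \;=\; y^n + (-1)^n y
\]
for every integer $y\ge 1$. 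As both sides are polynomials in $y$ agreeing at infinitely many points, this identity holds identically; substituting the parity of $n$ gives $y^n-y$ when $n$ is odd and $y^n+y$ when $n$ is even, as claimed.

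There is essentially no obstacle here; the only points needing care are purely clerical: the path $G'$ has length $n-1$ rather than $n$ (one cycle edge has been removed), and one must take the ``$c_1\neq c_2$'' case of Lemma~\ref{lem:paths}, whose sign is $(-1)^{(n-1)+1}=(-1)^n$. One should also observe that the final polynomial identity frees us from worrying about small or degenerate values of $y$, and that the computation is valid for all $n\ge 3$ (the only range in which $C_n$ is defined).
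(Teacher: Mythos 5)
Your proof is correct and follows exactly the route the paper intends: the paper gives no explicit proof, merely remarking that the formula is ``easy to derive'' from Lemma~\ref{lem:paths}, and your argument (delete one cycle edge, apply the $c_1\neq c_2$ case of Lemma~\ref{lem:paths} with $r=n-1$, and sum over the $(y+1)y$ ordered pairs of distinct endpoint colours) is precisely that derivation.
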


A \emph{theta graph} is a (simple) graph consisting of two vertices of degree 3 that are joined by three internally disjoint paths.

\begin{lemma}
\label{lem:theta}
Let $G$ be a theta graph on $n$ vertices.
Then for $y \geq 2$, the number of $(y+1)$-colourings of $G$ is at most $y^n - y^{n-1} + y^{n-2} + 2y^{n-3} - y^{n-4}$, with equality if and only if $G = K_{2, 3}$.
In particular, $Q_G(y) < y^n - \frac{4}{9} y^{n-1}$ for $y \in [3, \infty)$.
\end{lemma}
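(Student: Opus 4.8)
The plan is to compute $Q_G(y)$ explicitly in terms of the lengths of the three internally disjoint paths, then maximize over all admissible length vectors. Let the three paths joining the two degree-$3$ vertices $u$ and $w$ have lengths $a_1 \le a_2 \le a_3$ (number of edges), so that $n = a_1 + a_2 + a_3 - 1$ and each $a_i \ge 1$, with at most one $a_i$ equal to $1$ (since $G$ is simple). Fix colours $c(u)$ and $c(w)$; by Lemma~\ref{lem:paths} the number of $(y+1)$-colourings of a single path of length $a$ with prescribed endpoint colours is $(y^a + (-1)^{a+1})/(y+1)$ if the endpoints get distinct colours and $(y^a + (-1)^a y)/(y+1)$ if they agree. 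Since the three paths are internally disjoint, the count factorizes over the paths, so
\begin{align*}
 Q_G(y) &= \frac{(y+1)\,y}{(y+1)^3}\prod_{i=1}^{3}\bigl(y^{a_i} + (-1)^{a_i} y\bigr) \\
 &\qquad {}+ \frac{(y+1)\,y\,(y-1)}{(y+1)^3}\prod_{i=1}^{3}\bigl(y^{a_i} + (-1)^{a_i+1}\bigr),
\end{align*}
where the first term accounts for $c(u)=c(w)$ (there are $y+1$ such colour pairs) and the second for $c(u)\ne c(w)$ (there are $(y+1)y$ such pairs). This is an explicit closed form; expanding it gives $Q_G(y) = y^n + (\text{lower order})$, and the combinatorial task is to show the coefficient pattern is dominated termwise (for $y \ge 2$) by that of $K_{2,3}$, which is the theta graph with $(a_1,a_2,a_3) = (2,2,2)$.

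The key steps, in order: (i) verify that the formula above indeed gives $y^n - y^{n-1} + y^{n-2} + 2y^{n-3} - y^{n-4}$ when $(a_1,a_2,a_3)=(2,2,2)$ (so $n=5$), confirming the claimed extremal value; (ii) reduce to a finite check plus a monotonicity argument by showing that increasing any $a_i$ by $1$ (multiplying $n$-appropriately) does not increase $Q_G(y)/y^n$ for $y \ge 2$ — intuitively, a long path contributes a factor very close to $y^{a_i}$ with a relatively smaller correction than a short path, so lengthening paths only helps; (iii) handle the finitely many ``small'' cases $(1,a_2,a_3)$ and $(2,2,a_3)$ and $(1,2,a_3)$ etc.\ directly. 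Concretely, after dividing by the leading term it suffices to bound $\prod_i (1 + (-1)^{a_i} y^{1-a_i})$ and $\prod_i(1 + (-1)^{a_i+1}y^{-a_i})$; each factor lies in a narrow interval around $1$ whose width shrinks geometrically in $a_i$, and one checks that the worst case is all $a_i$ as small as possible subject to simplicity, i.e.\ $(2,2,2)$, while $(1,2,3)$ (the other candidate with $n=5$) gives a strictly smaller value for $y\ge 2$.

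For the final ``in particular'' clause, once the bound $Q_G(y) \le y^n - y^{n-1} + y^{n-2} + 2y^{n-3} - y^{n-4}$ is established it suffices to observe that $y^{n-2} + 2y^{n-3} - y^{n-4} \le \tfrac59 y^{n-1}$ for $y \ge 3$, i.e.\ $y^2 + 2y - 1 \le \tfrac59 y^3$, which holds on $[3,\infty)$ since at $y=3$ the left side is $14$ and the right side is $15$, and the cubic grows faster; hence $Q_G(y) \le y^n - \tfrac49 y^{n-1}$. I expect the main obstacle to be step (ii): making the ``lengthening paths only helps'' intuition into a clean termwise inequality is slightly delicate because of the alternating signs $(-1)^{a_i}$, so the cases split according to the parities of the $a_i$, and one must be careful that the small-$y$ endpoint $y=2$ (where the geometric decay is slowest) is genuinely covered — it may be cleanest to treat $y=2$ separately by a direct finite argument and then run the monotonicity estimate for $y \ge 3$.
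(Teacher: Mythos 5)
Your factorized formula has the correct structure but the wrong coefficients. With path lengths $a_1\le a_2\le a_3$ and $n=a_1+a_2+a_3-1$, the $(y^{a_i}+(-1)^{a_i}y)$ factors count extensions when the endpoints receive the \emph{same} colour, of which there are $y+1$ choices, while the $(y^{a_i}+(-1)^{a_i+1})$ factors count extensions for a \emph{distinct} ordered colour pair, of which there are $(y+1)y$. So the prefactors should be $(y+1)$ and $(y+1)y$, not $(y+1)y$ and $(y+1)y(y-1)$. As written, at $(a_1,a_2,a_3)=(2,2,2)$ and $y=3$ your formula returns $516$, whereas $Q_{K_{2,3}}(3)=204$. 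This is easy to fix, but a more serious problem lies in step (ii).

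The monotonicity you propose is false. At $y=3$ one computes $Q_G(y)/y^n$ for $(2,2,2),(2,2,3),(2,2,4),(2,2,5)$ to be approximately $0.8395,\ 0.7572,\ 0.7846,\ 0.7755$; in particular the step $(2,2,3)\to(2,2,4)$ \emph{increases} $Q_G(y)/y^n$. The reason is exactly the alternating-sign effect you flagged: your two factors $g_i=1+(-1)^{a_i+1}y^{-a_i}$ and $h_i=1+(-1)^{a_i}y^{1-a_i}$ are not simultaneously maximized at $a_i=2$ (indeed $g_i$ is maximized at $a_i=1$ while $h_i$ is maximized at $a_i=2$), and the ratio oscillates around its limiting value as a length increases, rather than decaying monotonically. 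Treating $y=2$ separately does not help, since the counterexample already occurs at $y=3$. A parity-preserving ``step by $2$'' monotonicity combined with extra base cases might be salvageable, but that requires a careful argument you have not supplied, and it is not the obvious intuition you invoke. The paper avoids this entirely: after clearing the denominator it rewrites the numerator as
\[
 y^{n+1}+(-1)^{r_1+r_2}y^{r_3+1}+(-1)^{r_1+r_3}y^{r_2+1}+(-1)^{r_2+r_3}y^{r_1+1}+(-1)^{r_1+r_2+r_3}y(y-1),
\]
notes the target numerator $(y+1)(y^n-y^{n-1}+y^{n-2}+2y^{n-3}-y^{n-4})=y^{n+1}+3y^{n-2}+y^{n-3}-y^{n-4}$, and when $r_1\ge2$ simply bounds the three middle terms by $3y^{n-2}$ termwise and the last term by $y^{n-3}-y^{n-4}$; equality pins down $(2,2,2)$. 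The case $r_1=1$ is then dispatched separately. This direct termwise comparison against the target sidesteps the need for any monotonicity in the $r_i$. Your concluding step for the ``in particular'' clause, by contrast, is correct: $y^2+2y-1\le\frac{5}{9}y^3$ on $[3,\infty)$ with strict inequality, giving $Q_G(y)<y^n-\frac{4}{9}y^{n-1}$.
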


\proof Observe that
\begin{equation}
  (y+1)(y^n - y^{n-1} + y^{n-2} + 2y^{n-3} - y^{n-4}) = y^{n+1} + 3y^{n - 2} + y^{n - 3} - y^{n - 4}.
  \label{eq:Lemma2.3}
\end{equation}
Let $v_1$ and $v_2$ be the vertices of $G$ of degree $3$ and let $r_1 \leq r_2 \leq r_3$ be the lengths of the paths between them,
so that $r_1 + r_2 + r_3 = n+1$.
By considering separately all colourings where $v_1$ and $v_2$ are coloured differently and the same, respectively, and applying Lemma~\ref{lem:paths}, we can express the (shifted) chromatic polynomial $Q_G(y)$ as
\begin{align*}
&(y+1)y\cdot\frac{y^{r_1} + (-1)^{r_1 + 1}}{y+1} \cdot \frac{y^{r_2} + (-1)^{r_2 + 1}}{y+1} \cdot \frac{y^{r_3} + (-1)^{r_3 + 1}}{y+1} \\
&\qquad+ (y+1)\cdot\frac{y^{r_1} + (-1)^{r_1}y}{y+1} \cdot \frac{y^{r_2} + (-1)^{r_2}y}{y+1} \cdot \frac{y^{r_3} + (-1)^{r_3}y}{y+1} \\
&= \frac{y}{(y+1)^2} \cdot \Bigl( y^{r_1 + r_2 + r_3} + (-1)^{r_1 + 1} y^{r_2 + r_3} + (-1)^{r_2 + 1} y^{r_1 + r_3} + (-1)^{r_3 + 1} y^{r_1 + r_2} \\
&\qquad\qquad\qquad+ (-1)^{r_1 + r_2} y^{r_3} + (-1)^{r_1 + r_3} y^{r_2} + (-1)^{r_2 + r_3} y^{r_1} + (-1)^{r_1 + r_2 + r_3 + 1} \\
&\qquad\qquad\qquad+ y^{r_1 + r_2 + r_3 - 1} + (-1)^{r_1} y^{r_2 + r_3} + (-1)^{r_2} y^{r_1 + r_3} + (-1)^{r_3} y^{r_1 + r_2} \\
&\qquad\qquad\qquad+ (-1)^{r_1 + r_2} y^{r_3+1} + (-1)^{r_1 + r_3} y^{r_2+1} + (-1)^{r_2 + r_3} y^{r_1+1} + (-1)^{r_1 + r_2 + r_3}y^2 \Bigr) \\[2mm]
&= \frac{y^{r_1 + r_2 + r_3} + (-1)^{r_1 + r_2} y^{r_3+1} + (-1)^{r_1 + r_3} y^{r_2+1} + (-1)^{r_2 + r_3} y^{r_1+1} + (-1)^{r_1 + r_2 + r_3}y(y-1)}{y+1}.
\end{align*}

If $r_1 \geq 2$ then the inequality is immediate from (\ref{eq:Lemma2.3}),
with equality if and only if $r_2 = r_3 = 2$.
So we may assume that $r_1 = 1$.
The numerator in the last line above becomes
$y^{r_2 + r_3 + 1} + (-1)^{r_2 + 1}y^{r_3 + 1} + (-1)^{r_3 + 1}y^{r_2 + 1} + (-1)^{r_2 + r_3}y$.
If one of the terms has a negative coefficient, we can leave it out in obtaining an upper bound and the claims follow easily. Thus, we may assume that all coefficients are positive.
In order for the second term to be positive, $r_2$ must be odd.
Since $r_2 > 1$ (or we would have a double edge between $x_1$ and $x_2$) we have $r_2 \geq 3$ and thus also $r_3 \geq 3$.
Now again the (strict) inequality is immediate.
\endproof

\begin{lemma}
\label{lem:delta2}
Suppose that $G$ is a graph with $\delta(G)\ge2$ that is not a cycle. Then $G$ contains a subgraph $F$ consisting of two cycles $C_1$ and $C_2$ that are either disjoint or their intersection is a path (possibly a single vertex). Moreover, if $G$ is not bipartite, then we may take $C_1$ to be shortest odd cycle in $G$.

Consequently, if $G$ is connected and $n=|G|$, then for every $y\ge3$,
\begin{equation}\label{eq:delta2-bip}
  Q_G(y) \le y^{n-4} (y^4 - y^3 + y^2 + 2y - 1),
\end{equation}
where equality holds if and only if $G$ is isomorphic to $K_{2,3}$. If $G$ contains a triangle, then
\begin{equation}\label{eq:delta2-triangle}
  Q_G(y) \le y^{n-4} (y^4 - y^3 + y - 1).
\end{equation}
\end{lemma}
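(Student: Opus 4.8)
The plan is to establish the structural claim first and then feed the subgraph $F=C_1\cup C_2$ into Lemmas~\ref{lem:subgraph}, \ref{lem:decomposition} and~\ref{lem:theta}.

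For the structural part I would split on the connectivity of $G$. If $G$ is disconnected, every component has minimum degree at least $2$ and hence contains a cycle; cycles in two distinct components are disjoint, and if $G$ is non-bipartite we take $C_1$ to be a shortest odd cycle and $C_2$ any cycle in another component. If $G$ is connected but not $2$-connected, look at the block--cut tree: it has at least two leaf blocks, and no leaf block can be a single edge (that would produce a vertex of degree~$1$), so each leaf block contains a cycle; since two distinct blocks meet in at most one vertex, cycles $C_1,C_2$ chosen inside two leaf blocks meet in at most one vertex. For the non-bipartite refinement here, let $C_1$ be a shortest odd cycle, let $B_0$ be the block containing it (necessarily $2$-connected, as it contains a cycle), and choose $C_2$ inside any leaf block distinct from $B_0$. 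Finally, if $G$ is $2$-connected, then because $G$ is not a cycle it admits an open ear decomposition starting from an arbitrarily prescribed cycle $C_1$ (a shortest odd cycle if $G$ is non-bipartite) and containing at least one ear $P$; letting $A_1$ be one of the two arcs of $C_1$ joining the endpoints of $P$, the graph $C_2:=A_1\cup P$ is a cycle with $C_1\cap C_2=A_1$ a path, and $F:=C_1\cup C_2=C_1\cup P$ is a theta graph.

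Now put $F=C_1\cup C_2$, $a=|C_1|$, $b=|C_2|$, and recall from Lemma~\ref{lem:chr_poly_cycles} that $Q_{C_m}(y)=y^m+(-1)^m y$, so that $y^{-m}Q_{C_m}(y)\le 1+y^{-3}$ for all $m\ge3$ and $y\ge1$. Suppose first $|V(C_1)\cap V(C_2)|\le1$. Applying Lemma~\ref{lem:decomposition} with $r=2$ gives $Q_G(y)\le y^{|G|-a-b}\,\tfrac{y}{y+1}\,Q_{C_1}(y)Q_{C_2}(y)$; dividing through by $y^{|G|-4}$ and using the previous estimate bounds the left side by $\tfrac{y}{y+1}y^4(1+y^{-3})^2=\tfrac{(y^3+1)^2}{y(y+1)}$, so~(\ref{eq:delta2-bip}) follows once one checks $(y^3+1)^2\le y(y+1)(y^4-y^3+y^2+2y-1)$, i.e.\ $(y-1)(y+1)^2\ge0$, which is strict for $y\ge3$. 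If in addition $C_1$ is a triangle, then $Q_{C_1}(y)=(y-1)y(y+1)$, the factor $y+1$ cancels the denominator, and the same computation collapses to the bound $(y-1)(y^3+1)=y^4-y^3+y-1$, giving~(\ref{eq:delta2-triangle}). If instead $C_1\cap C_2$ is a path containing an edge, then $F$ is a theta graph; Lemma~\ref{lem:theta} gives $Q_F(y)\le y^{|F|-4}(y^4-y^3+y^2+2y-1)$, with equality only for $K_{2,3}$, and Lemma~\ref{lem:subgraph} gives $Q_G(y)\le y^{|G|-|F|}Q_F(y)$, hence~(\ref{eq:delta2-bip}); for~(\ref{eq:delta2-triangle}) one notes that a theta graph containing a triangle has path lengths $1,2,r_3$ with $r_3\ge2$, for which the formula for $Q_F(y)$ derived in the proof of Lemma~\ref{lem:theta} simplifies to at most $y^{|F|-4}(y^4-y^3+y-1)$.

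For the equality case of~(\ref{eq:delta2-bip}): in the first configuration the inequality is strict, so equality forces the theta configuration, where Lemma~\ref{lem:theta} forces $F\cong K_{2,3}$. The hypotheses of the `moreover' part of Lemma~\ref{lem:subgraph} hold here ($Q_{K_{2,3}}(y)>0$, $y\ge3$, and $K_{2,3}$ has no two vertices with a common colour in every $(y+1)$-colouring), so $G[V(F)]=K_{2,3}$ and $G$ arises from $K_{2,3}$ by appending vertex-disjoint trees; since $\delta(G)\ge2$ rules out any non-trivial appended tree (it would contain a leaf, of degree~$1$ in $G$), we get $G\cong K_{2,3}$, which indeed attains the bound. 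I expect the main difficulty to be organisational: arranging the connectivity case split so that the shortest-odd-cycle option survives in every branch, and noticing that the crude bound $Q_{C_m}(y)\le y^m+y$ is too lossy for short cycles, so the parity sign $(-1)^m$ --- and the exact value of $Q_{C_3}$ in the triangle case --- must be carried through the estimates.
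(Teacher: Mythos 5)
Your proof is correct, and it is both complete and in places more careful than the paper's own write-up. The two arguments diverge mainly in the structural part: the paper splits on edge-connectivity (disconnected, has a cutedge, $2$-edge-connected), reducing to the $2$-edge-connected case where it grows a single ear from a shortest (odd) cycle; you split on vertex-connectivity (disconnected, block--cut tree with $\geq 2$ leaf blocks, $2$-connected with an open ear decomposition). Both work, and both correctly carry the shortest-odd-cycle option through each branch. For the numerical bounds you use the same tools (Lemmas~\ref{lem:decomposition}, \ref{lem:chr_poly_cycles}, \ref{lem:theta}, and Lemma~\ref{lem:subgraph}), but your uniform estimate $y^{-m}Q_{C_m}(y)\leq 1+y^{-3}$ followed by the single identity $y(y+1)(y^4-y^3+y^2+2y-1)-(y^3+1)^2=(y-1)(y+1)^2$ replaces the paper's separate worst-case checks for $|C_1|,|C_2|\ge 4$ and $|C_1|=3$; this is cleaner and immediately yields strictness of~(\ref{eq:delta2-bip}) in the non-theta configuration.

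Two places where you actually add content the paper leaves implicit: (i) the paper's sentence ``If $F$ is the theta graph, the claim follows from Lemma~\ref{lem:theta}'' does not by itself justify the stronger bound~(\ref{eq:delta2-triangle}) when the theta graph contains a triangle, since Lemma~\ref{lem:theta} only delivers the $K_{2,3}$-type bound; your explicit computation with $(r_1,r_2,r_3)=(1,2,r_3)$ from the formula inside the proof of Lemma~\ref{lem:theta}, simplifying to $Q_F(y)=(y-1)\bigl(y^{|F|-1}+(-1)^{|F|-1}y\bigr)\le y^{|F|-4}(y^4-y^3+y-1)$ for $|F|\ge4$, closes that. (ii) You spell out the equality analysis for~(\ref{eq:delta2-bip}) via the ``moreover'' clause of Lemma~\ref{lem:subgraph} plus $\delta(G)\ge 2$, which the paper compresses into ``completes the proof.'' Both additions are correct. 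One small polish you might make: in the block--cut-tree branch you should note that if $B_0$ (the block of the shortest odd cycle) happens to be the unique non-leaf block, or a leaf block itself, you still have a leaf block distinct from $B_0$ to harvest $C_2$ from, precisely because a tree with at least two nodes has at least two leaves and all leaves of the block tree are blocks; you gesture at this but could state it in one clean line.
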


\begin{proof}
The proof of the first claim is trivial when $G$ is disconnected. If $G$ has a cutedge $e$, then we consider the components of $G-e$. Each of them has at most one vertex of degree 1, so each of them contains a cycle; and if $G$ is nonbipartite, one of them is nonbipartite and contains an odd cycle. Thus, we may assume that $G$ is 2-edge-connected. Take a shortest cycle $C_1$ (shortest odd cycle if $G$ is nonbipartite) in $G$. This cycle is induced in $G$ and since $G$ is not a cycle, it has a vertex $v\notin V(C_1)$ that is adjacent to some vertex $u\in V(C_1)$. By adding the edge $uv$ and a shortest path in $G-uv$ from $v$ to $C_1$, we obtain a desired subgraph $F$.

To prove the second part, let us denote the RHS in (\ref{eq:delta2-bip}) as $g(y)$. If $F$ is the theta graph, the claim follows from Lemma \ref{lem:theta}. Next, assume that $C_1$ and $C_2$ are disjoint or that $C_1\cap C_2$ is a single vertex. By using Lemmas \ref{lem:decomposition} and \ref{lem:chr_poly_cycles}, we conclude that
$$
   Q_G(y) \le y^{n-|C_1|-|C_2|+1} (y^{|C_1|} + (-1)^{|C_1|}y) (y^{|C_2|} + (-1)^{|C_2|}y)/(y+1).
$$
If $|C_1|,|C_2|\ge 4$, then
\begin{eqnarray*}
   Q_G(y) &\le& y^{n-|C_1|-|C_2|+1} (y^{|C_1|} + y) (y^{|C_2|} + y)/(y+1)\\
   &\le& y^{n-7} (y^4 + y) (y^4 + y)/(y+1)\\
   & = & y^{n-5} (y^3 + 1) (y^2 - y + 1)\\
   & = & y^n - y^{n-1} + y^{n-2} + y^{n-3} - y^{n-4} + y^{n-5} < g(n).
\end{eqnarray*}
Similarly, if $|C_1|=3$, then we have the worst outcome if $|C_2|=4$ and we have
\begin{eqnarray*}
   Q_G(y) &\le& y^{n-6} (y^3 - y) (y^4 + y)/(y+1)\\
   & = & y^{n-4} (y-1) (y^3 + 1)\\
   & = & y^n - y^{n-1} + y^{n-3} - y^{n-4}.
\end{eqnarray*}
This proves (\ref{eq:delta2-triangle}) and completes the proof.
\end{proof}

\section{List colouring}
\label{sect:list chromatic polynomials}

Given a graph $G$, a \emph{list assignment} for $G$ is a function $L : V(G) \rarr \mP(\N)$.
If $L$ is a list assignment for $G$, an \emph{$L$-colouring} of $G$ (or a \emph{list-colouring} using list assignment $L$) is a proper colouring $c$ of $G$ such that $c(v) \in L(v)$ for every $v \in V(G)$.
We write $\norm{L}$ for the number of $L$-colourings of $G$.
We say that $G$ is \emph{$k$-choosable} if $\norm{L}>0$ for every list assignment for which $|L(v)|\ge k$ for every $v \in V(G)$.

Let $G$ be a bipartite graph on vertex classes $V^+$ and $V^-$.
Let $L$ be a list assignment for $G$ and let $i, j$ be distinct positive integers.
For $S \subseteq \N$, we define the \emph{$ij$-compression} $C_{ij}(S)$ by
$$
C_{ij}(S) = \begin{cases}
(S \cup \{i\}) \sm \{j\}, &\text{ if } j \in S \text{ and } i \notin S; \\
S &\text{ otherwise.}
\end{cases}
$$
Then we define the $ij$-compression $C_{ij}(L)$ of $L$ as follows:
$$
(C_{ij}(L))(v) = \begin{cases}
C_{ij}(L(v)), &\text{ if } v \in V^+; \\
C_{ji}(L(v)), &\text{ if } v \in V^-. \\
\end{cases}
$$

For an $L$-colouring $c$ of $V(G)$, an \emph{$ij$-Kempe component} of $G$ is a component of the induced subgraph of $G$ on the union of the colour classes $i$ and $j$ under $c$.

\begin{lemma}
\label{lem:compression}
Let $G$ be a bipartite graph on vertex classes $V^+$ and $V^-$.
Let $L: V^+ \cup V^- \rarr \mP(\N)$ be a list assignment for $G$ and let $i, j$ be distinct positive integers.
Then $\norm{C_{ij}(L)} \geq \norm{L}$.
\end{lemma}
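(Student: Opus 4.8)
The plan is to prove the local inequality $\norm{C_{ij}(L)} \ge \norm{L}$ by partitioning the $L$-colourings according to which vertices receive one of the two special colours $i,j$, and then comparing the resulting sums term by term. Write $L' = C_{ij}(L)$. The argument rests on three observations that are immediate from the definition of compression: for $v \in V^+$ one has $i \in L'(v)$ if and only if $L(v) \cap \{i,j\} \ne \emptyset$, and $j \in L'(v)$ if and only if $\{i,j\} \subseteq L(v)$; for $v \in V^-$ the same two equivalences hold with the roles of $i$ and $j$ exchanged; and for every colour $c \notin \{i,j\}$ and every vertex $v$, $c \in L'(v)$ if and only if $c \in L(v)$.

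For $W \subseteq V(G)$, let $N_W$ denote the number of proper colourings of $G - W$ that use only colours in $\N \setminus \{i,j\}$ and respect $L$, and, for $M \in \{L, L'\}$, let $M_W$ denote the number of proper colourings of $G[W]$ that use only the colours $i$ and $j$ and respect $M$. An $L$-colouring $c$ amounts to the data consisting of $W := c^{-1}(\{i,j\})$, the colouring $c$ induces on $G - W$ (which uses no colour of $\{i,j\}$), and the colouring $c$ induces on $G[W]$ (which uses only colours of $\{i,j\}$); conversely, any such data glue back to a proper colouring, because a vertex coloured from $\{i,j\}$ and a vertex coloured from outside $\{i,j\}$ can never be in conflict. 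Hence $\norm{L} = \sum_W N_W \cdot L_W$, and likewise $\norm{L'} = \sum_W N_W' \cdot L'_W$; but the third observation above gives $N_W' = N_W$ for every $W$, so it suffices to show $L'_W \ge L_W$ for each $W$.

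Now fix $W$ and decompose $G[W]$ into its connected components. If $K$ is such a component then, $G$ being bipartite and $K$ connected, $(K \cap V^+, K \cap V^-)$ is the unique bipartition of $K$, so every proper colouring of $K$ with colours in $\{i,j\}$ is constant on $K \cap V^+$ and on $K \cap V^-$; writing $m_K(M)$ for the number of such colourings that respect $M$, we obtain $m_K(M) \in \{0,1,2\}$ and $M_W = \prod_K m_K(M)$. The lemma therefore reduces to the single inequality $m_K(L') \ge m_K(L)$ for every component $K$, and this per-component comparison is the step I expect to be the main obstacle. I would prove it in two cases. If $m_K(L) \ge 1$, take a proper colouring of $K$ with colours in $\{i,j\}$ that respects $L$: then each $v \in K \cap V^+$ has some colour of $\{i,j\}$ in $L(v)$, hence $i \in L'(v)$ by the first observation, and each $v \in K \cap V^-$ similarly has $j \in L'(v)$; so the colouring sending $K \cap V^+$ to $i$ and $K \cap V^-$ to $j$ is proper and respects $L'$, giving $m_K(L') \ge 1$. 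If $m_K(L) = 2$, then both of the two colourings of $K$ with colours in $\{i,j\}$ respect $L$, which forces $\{i,j\} \subseteq L(v)$ for every $v \in K$; the list relations then yield $\{i,j\} \subseteq L'(v)$ for every $v \in K$, so both colourings still respect $L'$ and $m_K(L') = 2$. In either case $m_K(L') \ge m_K(L)$.

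Assembling the pieces, $L'_W = \prod_K m_K(L') \ge \prod_K m_K(L) = L_W$ for every $W$, and therefore $\norm{L'} = \sum_W N_W L'_W \ge \sum_W N_W L_W = \norm{L}$. The only genuinely delicate point is the per-component comparison, and within it the observation that $m_K(L) = 2$ can occur only when every vertex of $K$ already has both $i$ and $j$ in its list; this is exactly what makes the potentially lossy side of the compression (deleting $j$ from $V^+$-lists and $i$ from $V^-$-lists) harmless on such a component, while on every other component the compression can only help.
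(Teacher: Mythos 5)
Your proof is correct. Where the paper constructs an explicit injection $\phi$ from $L$-colourings to $C_{ij}(L)$-colourings --- swapping $i$ and $j$ on every Kempe component that meets the set $S$ where the lists changed, and observing that $\phi$ is an involution and hence injective --- you instead partition both sides of the inequality over the set $W = c^{-1}(\{i,j\})$ and compare the sums termwise. The factorization $\norm{M} = \sum_W N_W \cdot M_W$ with $N_W$ independent of whether $M = L$ or $M = C_{ij}(L)$ is a clean observation, and after that the problem collapses (as you note) to showing that on each connected component $K$ of $G[W]$, the count $m_K$ of valid $\{i,j\}$-colourings does not decrease, which you verify by the $m_K(L) \in \{0,1,2\}$ case analysis. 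The underlying combinatorial insight is the same in both proofs --- bipartiteness forces each Kempe component to have exactly two $\{i,j\}$-colourings, and compression can only help on a per-component basis --- but the mechanisms differ: the paper's involution is slicker (injectivity is immediate) and mirrors the classical compression arguments in extremal set theory, while your term-by-term comparison is a bit longer but more transparent about \emph{where} the gain occurs and would adapt more readily to weighted or refined counts (e.g.\ if one wanted to track the statistic $|W|$ or the number of Kempe components). Both are valid proofs of the lemma.
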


\proof We define an injective function $\phi$ which takes an $L$-colouring of $G$ to a $C_{ij}(L)$-colouring of $G$.
Let $S$ be the set of vertices $v \in V(G)$ such that $C_{ij}(L)(v) \neq L(v)$.
For any colouring $c$ of $G$, we derive $\phi(c)$ from $c$ by swapping the colours $i$ and $j$ on every $ij$-Kempe component of $G$ which intersects $S$.
Observe that $\phi$ is an involution; hence, $\phi$ is injective.

It suffices to show that if $c$ is an $L$-colouring of $G$, then $c' := \phi(c)$ is an $L'$-colouring of $G$, where $L' := C_{ij}(L)$.
Indeed, suppose for a contradiction that there exists $v \in V(G)$ such that $c'(v) \notin L'(v)$.
Since $c(v) \in L(v)$ we have $c(v), c'(v) \in \{i, j\}$.
If $c(v) = c'(v)$ then we must have $L(v) \neq L'(v)$ and hence $v \in S$, but in this case the colour of $v$ should have been changed;
this gives a contradiction.
So $c(v) \neq c'(v)$.

Let $H$ be the $ij$-Kempe component of $G$ containing $v$.
We claim that $c(z) = j$ for every $z \in V(H) \cap V^+$ and $c(z) = i$ for every $z \in V(H) \cap V^-$.
Indeed, $V(H)$ intersects $S$, since the colour of $v$ was swapped.
Let $w \in V(H) \cap S$ and note that $c(w) \in \{i, j\}$.
Suppose that $w \in V^+$ (the case when $w \in V^-$ is similar).
Then $j \in L(w)$ and $i \notin L(w)$, since $w \in S$.
Since $c(w) \in L(w)$ we have that $c(w) = j$.
The claim now follows from the definition of $H$.

Now we may assume without loss of generality that $v \in V^+$.
Then $c(v) = j$ and so $c'(v) = i$.
If $i \in L(v)$ then $i \in C_{ij}(L(v)) = L'(v)$, contradicting our assumption that $c'(v) \notin L'(v)$.
So $i \notin L(v)$.
But then $L'(v) = C_{ij}(L(v)) = (L(v) \cup \{i\}) \sm \{j\}$ and again we have a contradiction.
This proves the lemma.
\endproof

\begin{cor}
\label{cor:list colourings}
Let $G$ be a bipartite graph on vertex classes $V^+$ and $V^-$, let $m \in \N$ and let $\kappa : V(G) \rarr [m]$.
Then among all list assignments $L$ for $G$ such that $|L(v)| = \kappa(v)$ and $\max L(v) \leq m$ for every $v \in V(G)$, $\norm{L}$ is maximized by the list assignment $L_0$ which assigns to every $v \in V^+$ an initial segment of $[m]$ and to every $v \in V^-$ a terminal segment of $[m]$ (of length $\kappa(v)$ in each case).
\end{cor}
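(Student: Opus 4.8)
The plan is to deduce this corollary from Lemma~\ref{lem:compression} by a compression argument: starting from an arbitrary optimal list assignment, repeatedly apply $ij$-compressions and argue that this process terminates at the specified assignment $L_0$ (or at least at an assignment with the same number of colourings as $L_0$). First I would fix the constraints: only list assignments $L$ with $|L(v)| = \kappa(v)$ and $\max L(v) \le m$ are considered, so all lists live inside $[m]$; since $m$ is finite there are only finitely many such $L$, hence a maximiser exists. Let $L^*$ be a maximiser of $\norm{L}$ among these. The key observation is that for any $i < j$ in $[m]$, the $ij$-compression $C_{ij}(L^*)$ still satisfies the constraints — compression never changes the size of any list (it only possibly swaps a $j$ for an $i$), and it never introduces an element larger than $m$ — so by Lemma~\ref{lem:compression} we get $\norm{C_{ij}(L^*)} \ge \norm{L^*}$, and by maximality equality holds; thus $C_{ij}(L^*)$ is again a maximiser.

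Next I would set up a potential function to show that iterated compression drives $L^*$ toward $L_0$. A natural choice is $\Phi(L) = \sum_{v \in V^+} \sum_{c \in L(v)} c \;+\; \sum_{v \in V^-} \sum_{c \in L(v)} (m + 1 - c)$, i.e.\ we reward colours from $L(v)$ for being small when $v \in V^+$ and large when $v \in V^-$. An $ij$-compression with $i < j$ can only decrease this potential (on a $V^+$-vertex it may replace $j$ by the smaller $i$; on a $V^-$-vertex $C_{ji}$ may replace $i$ by the larger $j$, which decreases $m+1-c$), and it strictly decreases it whenever it actually changes some list. Since $\Phi$ takes finitely many values, after finitely many strict compressions we reach a maximiser $L'$ that is a fixed point of every $ij$-compression with $i < j$. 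It remains to show that such a fixed point must be exactly $L_0$: if $v \in V^+$ and $L'(v)$ is not an initial segment of $[m]$ of length $\kappa(v)$, there exist $i < j$ with $i \notin L'(v)$, $j \in L'(v)$, so $C_{ij}$ would change $L'(v)$ — contradiction; similarly for $v \in V^-$ and terminal segments. Hence $L' = L_0$, and therefore $\norm{L_0} = \norm{L'} \ge \norm{L}$ for all admissible $L$, which is the claim.

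There is a subtlety I should address carefully: the corollary asserts that $L_0$ \emph{is} a maximiser, so I need the chain $\norm{L_0} = \norm{L'} = \norm{L^*} \ge \norm{L}$ to start from an \emph{arbitrary} admissible $L$; this is fine since $L^*$ was chosen to maximise over all admissible assignments, so in fact every maximiser — including the fixed point $L'=L_0$ reached by compressing from $L^*$ — achieves the maximum value. I do not need uniqueness of the maximiser, only that $L_0$ attains the maximum. One should also double-check the edge cases where $\kappa(v) = 0$ or $\kappa(v) = m$ (the list is forced to be $\emptyset$ or $[m]$, so it is automatically both an initial and a terminal segment and compression-invariant), and the case $\norm{L} = 0$ for all $L$ (then the statement is vacuous).

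The main obstacle, I expect, is not any single step but making sure the compression operation genuinely preserves the constraint class and that the potential argument is airtight — in particular verifying that $C_{ij}$ with $i<j$ is monotone for $\Phi$ on \emph{both} sides of the bipartition with the asymmetric definition (initial segments on $V^+$, terminal on $V^-$, and the dual roles of $C_{ij}$ versus $C_{ji}$ in the definition of $C_{ij}(L)$). Once the bookkeeping of which compressions to apply and in what sense they decrease $\Phi$ is pinned down, the rest is a routine finiteness-plus-fixed-point argument resting entirely on Lemma~\ref{lem:compression}.
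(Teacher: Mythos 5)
Your proof is correct and follows the same route as the paper: apply Lemma~\ref{lem:compression} repeatedly and argue that the process terminates at $L_0$. The paper states the termination step without proof (``by repeatedly applying $ij$-compressions for $i<j$ we can transform $L$ into $L_0$''), whereas you rightly supply the potential-function argument to justify it; this is a welcome elaboration, not a different approach.
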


\proof
Let $L$ be any list assignment for $G$ such that $|L(v)| = \kappa(v)$ and $\max L(v) \leq m$ for every $v \in V(G)$ and such that $\norm{L}$ is maximum under this condition.
By repeatedly applying $ij$-compressions for $i < j$ we can transform $L$ into $L_0$.
By Lemma~\ref{lem:compression} these compressions do not decrease $\norm{L}$,
and the corollary follows immediately.
\endproof

A special case of the above setup where $\kappa(v)=y$ for each vertex $v$ was studied by Brown, Erey, and Li in \cite{BEL16}. Corollary \ref{cor:list colourings} in particular solves Conjecture 3.1 from their work.

In the setting of chromatic polynomials it makes more sense to specify the \emph{forbidden colours} for each vertex. Suppose that we consider colourings with $y+1$ colours and that $L'(v)\subseteq [y+1]$ are the colours that cannot be used to colour the vertex $v$. This is the list-colouring problem with lists being defined as $L(v) = [y+1]\sm L'(v)$.

Let $L'$ be a list-assignment of forbidden colours for a graph $G$. Let $C' = \cup_{v\in V(G)} L'(v)$ and suppose that $C'\subset [y+1]$. Then we define $Q_{G,L'}(y)$ as the number of $(y+1)$-colourings of $G$ in which no forbidden colour is used at any vertex. If $e=uv\in E(G)$, then we define $L''(e)=L'(u)\cup L'(v)$ and we set $L''(w) = L'(w)$ for all vertices $w\in V(G)\sm\{u,v\}$. The following deletion-contraction formula is easy to prove:
$$
   Q_{G,L'}(y) = Q_{G-e,L'}(y) - Q_{G/e,L''}(y).
$$
As a consequence we have the following observation.

\begin{proposition}
$Q_{G,L'}(y)$ is a monic polynomial in $y$ of degree $|G|$ for all values of $y$ such that $C' = \cup_{v\in V(G)} L'(v) \subseteq [y+1]$.
\end{proposition}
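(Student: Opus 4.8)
The plan is to induct on the number of edges of $G$, invoking the deletion--contraction formula displayed just above. It is convenient first to restate what is to be shown: for every graph $G$ and every assignment $L'$ of forbidden colours, the integer-valued function $y\mapsto Q_{G,L'}(y)$, defined on the cofinite set of integers $y$ with $C' = \cup_{v\in V(G)} L'(v)\subseteq[y+1]$, agrees with a monic polynomial of degree $|G|$; since two polynomials that agree at infinitely many points coincide, this polynomial is then unique, so it suffices to exhibit it.

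\emph{Base case.} If $G$ has no edges, say $V(G)=\{v_1,\dots,v_n\}$, then whenever $C'\subseteq[y+1]$ the vertices may be coloured independently, with $v_i$ receiving any of the $y+1-|L'(v_i)|$ colours of $[y+1]\sm L'(v_i)$. Hence $Q_{G,L'}(y) = \prod_{i=1}^n \bigl(y+1-|L'(v_i)|\bigr) = \prod_{i=1}^n \bigl(y-(|L'(v_i)|-1)\bigr)$, a product of $n$ monic linear polynomials, which is monic of degree $n=|G|$.

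\emph{Inductive step.} Suppose $|E(G)|\ge1$ and fix an edge $e=uv$. By the deletion--contraction formula, $Q_{G,L'}(y) = Q_{G-e,L'}(y) - Q_{G/e,L''}(y)$ for all admissible $y$. The graph $G-e$ has the same vertex set, the same assignment $L'$ (hence the same union $C'$), and strictly fewer edges, so by the inductive hypothesis $Q_{G-e,L'}$ agrees with a monic polynomial of degree $|G|$. For $G/e$: its vertex set is $V(G)\sm\{u,v\}$ together with one new vertex carrying the list $L'(u)\cup L'(v)$, so the union of the new lists is again exactly $C'$; thus the constraint $C'\subseteq[y+1]$ is precisely what lets us evaluate $Q_{G/e,L''}$, and admissibility is preserved. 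Moreover $|G/e| = |G|-1$, and $G/e$ has strictly fewer edges than $G$ (contraction deletes $e$ and may further identify parallel edges; no loop is created since $G$ is simple, and deleting a parallel copy of an edge changes neither the colour count nor the vertex list assignment, so we may assume $G/e$ is simple). By induction $Q_{G/e,L''}$ agrees with a monic polynomial of degree $|G|-1$. Subtracting, the degree-$|G|$ term of $Q_{G,L'}$ is inherited unchanged from $Q_{G-e,L'}$, so $Q_{G,L'}$ agrees with a monic polynomial of degree $|G|$, completing the induction.

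I do not expect any serious obstacle: the only points requiring attention are bookkeeping ones --- checking that contraction leaves the palette $C'$ unchanged, so that the inductive hypothesis genuinely applies to $G/e$ over the same range of $y$, and handling the parallel edges that contraction may create. There is no analytic content; everything rests on the fact that the stated deletion--contraction identity, being an identity of functions on a cofinite set of integers $y$, lifts to an identity of polynomials, and that the leading term $\prod_v\bigl(y-(|L'(v)|-1)\bigr)$ survives the recursion.
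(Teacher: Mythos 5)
Your proof is correct and takes exactly the route the paper intends: the paper states the deletion--contraction identity and then presents the Proposition as "a consequence," and your induction on the number of edges (with edgeless graphs as base case, and the observation that $G/e$ drops one vertex so its monic degree-$(|G|-1)$ polynomial cannot disturb the leading term coming from $G-e$) is the standard way to make that consequence explicit. The only blemish is the closing remark that the "leading term $\prod_v\bigl(y-(|L'(v)|-1)\bigr)$ survives the recursion" --- that product is the full polynomial only for edgeless $G$, not the leading term in general; what actually survives is just the monic $y^{|G|}$ coefficient, which is all you need and all your inductive step actually establishes.
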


The polynomial $Q_{G,L'}(y)$ is called the \emph{list chromatic polynomial\/} of $G$ with respect to the forbidden list assignment $L'$.
We will need upper bounds on the values of some list chromatic polynomials. Let us start with the case when $G$ is a path $P_n$ on $n$ vertices $\{v_1,v_2,\dots,v_n\}$ (and edges $v_iv_{i+1}$, $1\le i<n$). Suppose first that each vertex has precisely one forbidden colour. By Corollary \ref{cor:list colourings}, the largest possible number of list colourings is attained when $L'(v_i)=\{1\}$ if $i$ is odd and $L'(v_i)=\{4\}$ if $i$ is even.\footnote{Here we write 4 instead of $y+1$ as used in the corollary. Of course, this is irrelevant.}
Let $A_n(y) = Q_{P_n,L'}(y)$. We will also consider the cases when $L'(v_1)=\{1,2\}$ (and the rest as before). In this case, we denote the corresponding polynomial by $B_n(y) = Q_{P_n,L'}(y)$. If we also have two forbidden colours for the other end of the path ($L'(v_n)=\{1,2\}$ or $\{3,4\}$, depending on the parity of $n$), then we denote the corresponding chromatic polynomial by $C_n(y)$. It is easy to see that the following recurrence holds:
\begin{align}
  A_n(y) & = A_{n-1}(y) + (y-1)B_{n-1}(y) \nonumber \\
  B_n(y) & = A_{n-1}(y) + (y-2)B_{n-1}(y) \nonumber \\
  C_n(y) & \le B_{n-1}(y) + (y-2)C_{n-1}(y). \label{eq:recurrence paths}
\end{align}
The last recurrence starts with $n=3$. The initial conditions are: $A_1(y) = y$, $B_1(y)=y-1$ and $C_2(y)=y^2-3y+4$.
For our purpose it will suffice to have an upper bound $\widehat{C}_n(y)$ on  $C_n(y)$, which is obtained by solving the last recurrence in (\ref{eq:recurrence paths}) when the inequality is replaced by equality.
${A}_n(y)$ and ${B}_n(y)$ are easy to compute for small values of $n$, and they are collected in Table \ref{tab:1} for our further use. As for the third kind, $\widehat{C}_n(y)$, we will only need the following one:
\begin{equation}\label{eq:C_5(y)}
  \widehat{C}_5(y) = y^5-6y^4+19y^3-34y^2+33y-13.
\end{equation}

Of course one can also easily solve the linear recurrence for these polynomials.

\begin{table}[htb]
\label{tab:1}
\begin{center}
$
\begin{array}{|c||l|l|}
  \hline
  n & {A}_n(y) & {B}_n(y) \\
  \hline
  1 & y & y-1 \\
  2 & y^2-y+1 & y^2-2y+2 \\
  3 & y^3-2y^2+3y-1 & y^3-3y^2+5y-3 \\
  4 & y^4-3y^3+6y^2-5y+2 & y^4-4y^3+9y^2-10y+5 \\
  5 & y^5-4y^4+10y^3-13y^2+10y-3 & y^5-5y^4+14y^3-22y^2+20y-8 \\
  6 & y^6-5y^5+15y^4-26y^3+29y^2-18y+5 & y^6-6y^5+20y^4-40y^3+51y^2-38y+13 \\
  \hline
\end{array}
$
\end{center}
\caption{Values of ${A}_n(y)$ and ${B}_n(y)$, the list chromatic polynomials of paths.}
\end{table}

We will need some other list chromatic polynomials. In the proof of the next result and also later on, we will use the following \emph{truth function} $\tau$.
Given a proposition $A$, we set $\tau(A)=1$ if $A$ is true and $\tau(A)=0$ if $A$ is false.

\begin{proposition}
\label{prop:C3,C5,K23}
Let\/ $G$ be a graph and let $y\ge3$ be an integer. Suppose that the list $L'(v)\subseteq [y+1]$ of forbidden colours is nonempty for each vertex of $G$. Then we have the following:
\begin{itemize}
  \item[\rm (a)] If $G=K_3$, then
  $Q_{K_3,L'}(y) \le y^3 -3y^2 + 5y -4$ with equality for $y\ge3$ if and only if $|L'(v)|=1$ for each vertex of $K_3$ and the forbidden colours are distinct for all three vertices.
  \item[\rm (b)] If $G=K_{2,3}$, then $Q_{K_{2,3},L'}(y) \le y^5 - 6y^4 + 21y^3 -38y^2 + 36y - 13$ with equality if and only if $|L'(v)|=1$ for each vertex of $K_{2,3}$ and the forbidden colours are the same in each bipartite class and different between the classes.
  \item[\rm (c)] If $G=C_5$, then $Q_{C_5,L'}(y) \le y^5 - 5y^4 + 15y^3 - 28y^2 + 31y - 16$, with equality if and only if $c_i \neq c_{i+1}$ for every $i \in [5]$ (where $c_6=c_1$) and $c_i = c_{i+2}$ for two indices $i \in [5]$.
\end{itemize}
\end{proposition}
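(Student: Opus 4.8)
The plan is to reduce all three parts to the case of singleton forbidden lists and then to carry out a small finite computation for each of the three graphs. The reduction uses the evident monotonicity of the list chromatic polynomial: if $L_1'(v)\subseteq L_2'(v)$ for every vertex, then every $L_2'$-avoiding colouring is also $L_1'$-avoiding, so $Q_{G,L_1'}(y)\ge Q_{G,L_2'}(y)$. Consequently the maximum of $Q_{G,L'}(y)$ over assignments with $L'(v)\ne\emptyset$ is attained when $|L'(v)|=1$ for every $v$, say $L'(v)=\{c_v\}$, and then $Q_{G,L'}(y)$ depends only on the partition of $V(G)$ into the colour classes $\{v:c_v=i\}$ (permuting colours does not change the count), provided $y+1$ is at least the number of classes --- otherwise such an assignment does not exist and may be ignored. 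For the equality statements I would also record the strict version: for $G\in\{K_3,K_{2,3},C_5\}$ and $y\ge3$, if $|L'(v)|\ge2$ for some $v$ then deleting a colour from $L'(v)$ strictly increases $Q$, because that colour is in fact used at $v$ by some colouring avoiding the smaller lists. Part~(a) then follows at once by distinguishing the three ways in which $c_1,c_2,c_3$ can coincide on $K_3$: a one-line count (condition on the colour of $v_1$, keeping track of whether it equals some $c_j$) gives $Q_{K_3,L'}(y)=y^3-3y^2+5y-4$ when $c_1,c_2,c_3$ are pairwise distinct, $(y-1)(y^2-2y+2)=y^3-3y^2+4y-2$ when exactly two coincide, and $y(y-1)(y-2)=y^3-3y^2+2y$ when all three coincide; the latter two are strictly smaller for $y\ge3$.

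For part~(b), after the reduction I would pass to the allowed-colour lists $L(v)=[y+1]\sm L'(v)$, which satisfy $|L(v)|=y$ and $L(v)\subseteq[m]$ with $m=y+1$. Corollary~\ref{cor:list colourings} then shows that $\norm{L}$ is maximised when $L$ assigns the initial segment $\{1,\dots,y\}$ to every vertex of $V^+$ and the terminal segment $\{2,\dots,y+1\}$ to every vertex of $V^-$, that is, when $L'$ forbids $\{y+1\}$ on $V^+$ and $\{1\}$ on $V^-$. Conditioning on the two colours used on $V^+$ and splitting according to whether they are equal, one computes
\[
  Q_{K_{2,3},L'}(y)=y^3+3(y-1)^4+(y-1)(y-2)^4=y^5-6y^4+21y^3-38y^2+36y-13 .
\]
For uniqueness I would argue that any singleton assignment not of the above form admits an $ij$-compression with $i<j$ that \emph{strictly} increases $\norm{L}$, so that the canonical assignment is the unique maximiser among singleton assignments; together with the strict monotonicity this yields the equality characterisation.

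For part~(c), reducing to singleton lists $\{c_1\},\dots,\{c_5\}$ around $C_5=v_1v_2v_3v_4v_5$, the value $Q_{C_5,L'}(y)$ depends only on the partition of $\{1,\dots,5\}$ into colour classes, and up to the dihedral symmetry of $C_5$ there are only finitely many such types. For each type I would compute $Q_{C_5,L'}(y)$ explicitly, either from the deletion--contraction identity $Q_{C_5,L'}(y)=Q_{P_5,L'}(y)-Q_{C_4,L''}(y)$ (delete the edge $v_5v_1$ to obtain the path $P_5$, contract it to obtain a $4$-cycle with merged list $\{c_1,c_5\}$) or by inclusion--exclusion over the events $x_i=c_i$ combined with Lemma~\ref{lem:paths}. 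For instance, the type $c=(a,a',a,a',b)$ --- equivalently, $c_i\ne c_{i+1}$ for every $i$ and $c_i=c_{i+2}$ for exactly two indices $i$ --- gives exactly $y^5-5y^4+15y^3-28y^2+31y-16$, whereas the all-distinct type gives $y^5-5y^4+15y^3-30y^2+39y-26$, the type $(a,b,a,c,d)$ gives $y^5-5y^4+15y^3-29y^2+35y-21$, the type $(a,b,b,a,c)$ gives $y^5-5y^4+14y^3-25y^2+27y-14$, and the remaining types are smaller still. Comparing these finitely many polynomials identifies the extremal configurations and proves~(c).

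The bulk of the work, and the step most likely to be delicate, is the case analysis for part~(c): one must enumerate the partition types of the $5$-cycle without omission, produce a polynomial for each, and verify that the difference between the maximum and every other type is strictly positive for \emph{all} integers $y\ge3$ (including $y=3$, where types using five colours are vacuous). A secondary point needing care is the uniqueness claim in part~(b): Corollary~\ref{cor:list colourings} supplies the inequality but not directly the uniqueness, so this must be settled either by sharpening the compression argument (checking that a non-canonical singleton assignment always admits a strictly improving compression) or by a short direct computation over the ways in which the five forbidden colours can coincide and be distributed between the two bipartition classes.
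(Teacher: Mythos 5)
Your proposal follows essentially the same route as the paper: both first reduce to the case of singleton forbidden lists (the paper via the inclusion--exclusion bound $Q_{G,L'}(y)\le\sum_{I\subseteq[n]}(-1)^{|I|}\Sigma_I$ with equality precisely for singletons, you by direct monotonicity of the count under shrinking the forbidden lists --- the two are equivalent), and both then compute the resulting polynomials by elementary counting, invoking Corollary~\ref{cor:list colourings} to fix the lists canonically in part~(b). Your direct count for part~(b), conditioning on the two colours on $V^+$ and giving $y^3 + 3(y-1)^4 + (y-1)(y-2)^4$, is a clean alternative to the paper's inclusion--exclusion and yields the same polynomial. The genuine divergence is in part~(c): you propose enumerating all partition types of $V(C_5)$ modulo the dihedral symmetry and computing a polynomial for each, whereas the paper sidesteps a full enumeration by splitting on whether some $c_i=c_{i+1}$; in the ``no adjacent coincidence'' case it observes that each indicator $\tau(c_i=c_{i+2})$ enters the inclusion--exclusion sum with the same positive coefficient $y^2-4y+5$, so the maximum is attained when exactly two such coincidences hold (the pigeonhole maximum), and the adjacent-coincidence case is disposed of separately. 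Your route would work --- the sample polynomials you list all check out against what inclusion--exclusion gives --- but, as you acknowledge, the enumeration must be verified exhaustive, and this is the fiddly step the paper's coefficient argument neatly avoids. One further caution on uniqueness: sharpening Lemma~\ref{lem:compression} to a strict inequality is not immediate (it is stated only as $\norm{C_{ij}(L)}\ge\norm{L}$), and the claim that deleting a forbidden colour \emph{always} strictly increases $Q$ requires checking that, in the extremal singleton configurations, every allowed colour at every vertex is realized by some colouring. Your fallback of a direct comparison over all singleton types --- which is effectively what the paper does --- is the safer way to close the equality cases.
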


\begin{proof}
We could proceed in a similar way as when finding the recurrence for the paths. Note, however, that $C_3$ and $C_5$ are not bipartite.
We will use a different counting method based on inclusion-exclusion principle.
We will denote the vertices of $G$ by $v_1,\dots,v_n$, $n=|G|$, and for each $i\in [n]$, pick a forbidden colour $c_i\in L'(v_i)$.
For each $I \subseteq [n]$, let $\Sigma_I$ be the number of colourings of $G$ such that $c(x_i) = c_i$ for each $i \in I$. Knowing these numbers, we can determine the number of colourings for which $c(v_i)\ne c_i$ ($i=1,\dots,n$) by the inclusion-exclusion principle. In other words, we have:
\begin{equation}
    Q_{G,L'}(y) \le \sum_{I\subseteq [n]} (-1)^{|I|} \Sigma_I, \label{eq:inclusion-exclusion}
\end{equation}
with equality in (\ref{eq:inclusion-exclusion}) if $|L(v_i)|=1$ for every $i\in [n]$.

(a) $G=K_3$. In this case, $\Sigma_\emptyset = y(y^2-1)$, $\Sigma_{\{i\}} = y(y-1)$ ($i\in [3]$), $\Sigma_{\{i,j\}} = \tau(c_i\ne c_j)(y-1)$ ($1\le i<j\le 3$), and $\Sigma_{\{1,2,3\}} = \tau(c_1\ne c_2) \tau(c_1\ne c_3) \tau(c_2\ne c_3)$. If $c_1,c_2,c_3$ are all different, then
\begin{align}
Q_{K_3,L'}(y)
&\leq \Sigma_{\emptyset} - \Sigma_{\{1\}} - \Sigma_{\{2\}} - \Sigma_{\{3\}} + \Sigma_{\{1, 2\}} + \Sigma_{\{1, 3\}} +\Sigma_{\{2, 3\}} - \Sigma_{\{1, 2, 3\}} \nonumber\\
&=y(y^2-1) - 3y(y-1) + 3(y-1) -1 = y^3 -3y^2 + 5y - 4.
\nonumber
\end{align}
It is easy to see that equality holds if and only if there are no other forbidden colours.

Suppose now that $c_1=c_2$. Then we obtain in the same way:
$$
Q_{K_3,L'} \leq y(y^2-1) - 3y(y-1) + 2(y-1) = y^3-3y^2+4y-2 < y^3 -3y^2 + 5y -4.
$$

For the proof of (b), let $v_1, v_2$ be the vertices of $K_{2,3}$ of degree $3$ and let $v_3, v_4, v_5$ be the vertices of degree $2$.
By Corollary~\ref{cor:list colourings} we may assume that $c_1 = c_2 = 1$ and $c_3 = c_4 = c_5 = y+1$.
Observe that
\begin{align*}
\Sigma_{\emptyset} &= y^5 - y^4 + y^3 + 2y^2 - y \text{ (see Lemma \ref{lem:theta}),}\\
\Sigma_{\{i\}} &= y(y-1)^3 + y^3 \text{ for } i \in [5], \\
\Sigma_{\{1, 2\}} &= y^3, \\
\Sigma_{\{i, j\}} &= y^2 + (y-1)^3 \text{ for } i \in \{1,2\} \text{ and } j \in \{3,4,5\}, \\
\Sigma_{\{j, k\}} &= y(y^2 - y + 1) \text{ for distinct } j, k \in \{3, 4, 5\}, \\
\Sigma_{\{1, 2, j\}} &= y^2 \text{ for } j \in \{3,4,5\}, \\
\Sigma_{\{i, j, k\}} &= y^2 - y + 1 \text{ for } i \in \{1,2\} \text{ and distinct } j, k \in \{3, 4, 5\}, \\
\Sigma_{\{3, 4, 5\}} &= y^2, \\
\Sigma_{\{1, 2, j, k\}} &= y \text{ for distinct } j, k \in \{3, 4, 5\}, \\
\Sigma_{\{i, 3, 4, 5\}} &= y \text{ for } i \in \{1,2\}, \text{ and }\ \Sigma_{\{1, 2, 3, 4, 5\}} = 1.
\end{align*}

By the inclusion-exclusion principle, the number of list colourings is
\begin{align*}
\sum_{I \subseteq [5]} (-1)^{|I|} \Sigma_I &= (y^5 - y^4 + y^3 + 2y^2 - y) -5(y(y-1)^3 + y^3) +y^3 +6(y^2 + (y-1)^3) \\
&\qquad+3y(y^2 - y + 1) -3y^2 -6(y^2 - y + 1) -y^2 +5y -1 \\
&= y^5 - y^4 -5y^2 + 13y - 7 - 5y(y^3 - 3y^2 + 3y - 1) + 6(y^3 - 3y^2 + 3y - 1) \\
&= y^5 - 6y^4 + 21y^3 -38y^2 + 36y - 13.
\end{align*}

If there are additional forbidden colours or the forbidden colours are different than the case treated above, the number of list colourings is strictly smaller. This confirms the statement about when equality holds.

(c) We order the vertices of $C_5$ naturally as $v_1, \ldots, v_5$.
We adopt the convention that $c_{5 + j} = c_j$.

Suppose first that $c_i \neq c_{i+1}$ for every $i \in [5]$. Then we have:
\begin{align*}
\Sigma_{\emptyset} &= y^5 - y \text{ (see Lemma \ref{lem:chr_poly_cycles}),}\\
\Sigma_{\{i\}} &= y^4 - y^3 + y^2 - y \text{ for } i \in [5], \\
\Sigma_{\{i, i+1\}} &= y^3 - y^2 + y - 1 \text{ for } i \in [5], \\
\Sigma_{\{i, i+2\}} &= y^3 - 2y^2 + 2y - 1 + \tau(c_{i} = c_{i+2})(y^2 - 2y + 1) \text{ for } i \in [5], \\
\Sigma_{\{i, i+1, i+2\}} &= y^2 - y + 1 - \tau(c_{i} = c_{i+2}) \text{ for } i \in [5], \\
\Sigma_{\{i, i+1, i+3\}} &= (y-1)^2 + \tau(c_{i} = c_{i+3})(y-1) + \tau(c_{i+1} = c_{i+3})(y-1) \text{ for } i \in [5], \\
\Sigma_{\{i, i+1, i+2, i+3\}} &= y - 1 + \tau(c_{i} = c_{i+3}) \text{ for } i \in [5], \\
\Sigma_{[5]} &= 1.
\end{align*}
We observe that the coefficient of $\tau(c_{i} = c_{i+2})$ in $\sum_{I \in \mI} (-1)^{|I|} \Sigma_I$ is $y^2 - 4y + 5 > 0$.
Hence we may assume that the number of indices $i$ such that $c_i = c_{i+2}$ is as large as possible.
By the pigeonhole principle there are at most two such pairs.
Now we obtain
\begin{align*}
\sum_{I \in \mI} (-1)^{|I|} \Sigma_I
&\le y^5 - y - 5(y^4 - y^3 + y^2 - y) \\
&\qquad+ 5(y^3 - y^2 + y - 1) + 5(y^3 - 2y^2 + 2y - 1) \\
&\qquad- 5(y^2 - y + 1) - 5(y^2 - 2y + 1)
+ 5(y-1) - 1 + 2(y^2-4y+5) \\
&= y^5 - 5y^4 + 15y^3 - 28y^2 + 31y - 16.
\end{align*}

Now suppose that $c_i = c_{i+1}$ for some $i \in [5]$.
Without loss of generality we may assume that $c_1 = c_5$.
By similar arguments to those in the proof of Lemma~\ref{lem:compression} we may assume that $c_1 \neq c_2$, $c_2 \neq c_3$, $c_3 \neq c_4$, $c_4 \neq c_5$, and $c_2 = c_4$ and $c_1=c_3$.
Now, $\Sigma_I = 0$ whenever $\{1,5\}\subseteq I$ and
\begin{align*}
\Sigma_{\emptyset} &= y^5 - y \text{ (see Lemma \ref{lem:chr_poly_cycles}),}\\
\Sigma_{\{i\}} &= y^4 - y^3 + y^2 - y \text{ for } i \in [5], \\
\Sigma_{\{i, i+1\}} &= y^3 - y^2 + y - 1 \text{ for } i \in [4], \\
\Sigma_{\{i, i+2\}} &= y^3 - y^2 \text{ for } i \in [3], \\
\Sigma_{\{i, i+2\}} &= y^3 - 2y^2 + 2y - 1 \text{ for } i \in \{4, 5\}, \\
\Sigma_{\{i, i+1, i+2\}} &= y^2 - y \text{ for } i \in [3], \\
\Sigma_{\{i, i+1, i+3\}} &= y^2 - y \text{ for } i \in [4], \\
\Sigma_{\{i, i+1, i+2, i+3\}} &= y - 1 \text{ for } i \in [2], \\
\Sigma_{[5]} &= 1.
\end{align*}

Hence
\begin{align*}
\sum_{I \in \mI} (-1)^{|I|} \Sigma_I &= y^5 - y - 5(y^4 - y^3 + y^2 - y) \\
&\qquad+ 4(y^3 - y^2 + y - 1) + 3(y^3 - y^2) + 2(y^3 - 2y^2 + 2y - 1) \\
&\qquad- 7(y^2 - y) + 2(y-1) - 1 \\[2mm]
&= y^5 - 5y^4 + 14y^3 - 23y^2 + 21y - 9 \\
&\leq y^5 - 5y^4 + 15y^3 - 28y^2 + 31y - 16.
\end{align*}
\end{proof}

\begin{proposition}
\label{prop:C4 plus leaf}
Let\/ $G$ be a graph obtained from a $4$-cycle $v_1v_2v_3v_4v_1$ by adding a vertex $v_5$ adjacent to $v_1$ and let $y\ge3$ be an integer. Suppose that the lists $L'(v_i)\subseteq [y+1]$ of forbidden colours are nonempty for $i=2,3,4,5$. Then we have the following:
\begin{itemize}
  \item[\rm (a)]
  $Q_{G,L'}(y) \le y^5 - 4y^4 + 10y^3 - 13y^2 + 10y - 3$.
\smallskip
  \item[\rm (b)]
  If\/ $|L'(v_5)|\ge2$, then $Q_{G,L'}(y) \le y^5 - 5y^4 + 14y^3 - 23y^2 + 23y - 11$.
\end{itemize}
\end{proposition}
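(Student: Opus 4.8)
Here is the proof strategy I would follow.

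\medskip
\noindent\emph{Reduction to a canonical list assignment.} Since replacing any $L'(v)$ by a subset cannot decrease the number of list colourings, it suffices to bound $Q_{G,L'}(y)$ under the assumptions $L'(v_1)=\emptyset$, $|L'(v_i)|=1$ for $i\in\{2,3,4\}$, and $|L'(v_5)|=1$ (part (a)) or $|L'(v_5)|=2$ (part (b)). Now $G$ is bipartite with classes $V^+=\{v_1,v_3\}$ and $V^-=\{v_2,v_4,v_5\}$. Passing to the allowed-colour lists $L(v)=[y+1]\sm L'(v)$ (so that $Q_{G,L'}(y)=\norm{L}$) and applying Corollary~\ref{cor:list colourings} with $m=y+1$, the number of list colourings is maximised when $L$ assigns an initial segment of $[y+1]$ to $v_1,v_3$ and a terminal segment to $v_2,v_4,v_5$; translating back, the extremal forbidden assignment is $L'(v_1)=\emptyset$, $L'(v_3)=\{y+1\}$, $L'(v_2)=L'(v_4)=L'(v_5)=\{1\}$ in part (a), with $L'(v_5)=\{1,2\}$ instead in part (b). Thus the proposition reduces to evaluating $Q_{G,L'}(y)$ exactly for these two explicit assignments.

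\medskip
\noindent\emph{Counting by conditioning on $c(v_1)$.} For the evaluation I would fix the colour $c:=c(v_1)$ of the degree-$3$ vertex. Because $v_5$ is a leaf at $v_1$ and $v_3$ is non-adjacent to $v_1$, once $c$ is fixed the colour of $v_5$ and the colouring of the path $v_2v_3v_4$ are independent: $v_5$ has exactly $y+1-|L'(v_5)\cup\{c\}|$ available colours, while the path $v_2v_3v_4$ contributes a factor $h(c)$ equal to the number of $(y+1)$-colourings with $c(v_2),c(v_4)\notin\{1,c\}$ and $c(v_3)\ne y+1$. A routine two-step count — choose $c(v_3)\ne y+1$, then $c(v_2)$ and $c(v_4)$ independently — shows that $h(c)$ is one of three cubic polynomials according to whether $c=1$, $c=y+1$, or $c\in\{2,\dots,y\}$ (the first case being $A_3(y)$ from Table~\ref{tab:1}). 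Then
\[
   Q_{G,L'}(y) \;=\; \sum_{c\in[y+1]} \bigl(y+1-|L'(v_5)\cup\{c\}|\bigr)\,h(c),
\]
and expanding this sum — in which the $v_5$-multiplicity equals $y$ or $y-1$ in part (a), and $y-1$ or $y-2$ in part (b), depending on how $c$ meets $L'(v_5)$ — collapses to $y^5-4y^4+10y^3-13y^2+10y-3$ and $y^5-5y^4+14y^3-23y^2+23y-11$ respectively.

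\medskip
\noindent\emph{Where the work lies.} I expect no analytic subtlety: everything is a polynomial identity in $y$, so it holds for every integer $y\ge 3$ once verified, and the hypotheses $L'(v_i)\ne\emptyset$ (and $|L'(v_5)|\ge2$ in (b)) enter only in the reduction step. The only real effort is bookkeeping — pinning down the extremal configuration coming out of Corollary~\ref{cor:list colourings} (in particular that $v_5$ lies on the size-$3$ side together with $v_2$ and $v_4$, which the bipartition forces), computing $h(c)$ in its three cases, and assembling the pieces without arithmetic slips. As an independent check, or as an alternative to the conditional-independence step, one can apply deletion–contraction to the pendant edge $v_1v_5$ — reducing each bound to a pair of list-chromatic-polynomial evaluations on $C_4$ — or run the inclusion–exclusion scheme from the proof of Proposition~\ref{prop:C3,C5,K23}; both yield the same two polynomials.
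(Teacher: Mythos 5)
Your proof is correct and takes a genuinely different route in its counting step. Both you and the paper begin by invoking Corollary~\ref{cor:list colourings} to pin down the extremal canonical assignment (and the reduction to singleton forbidden lists for $v_2,v_3,v_4$ --- and a size-two list for $v_5$ in part~(b) --- is valid since shrinking forbidden sets only increases the count). But from there the paper uses the inclusion--exclusion machinery of $(\ref{eq:inclusion-exclusion})$: it fixes one forbidden colour $c_i$ per vertex, tabulates $\Sigma_I$ for every $I\subseteq\{2,3,4,5\}$ (and for $I\subseteq\{2,3,4,5,6\}$ avoiding $\{5,6\}$ in part~(b)), and sums with signs, exactly as in Proposition~\ref{prop:C3,C5,K23}. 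You instead condition on $c(v_1)$ and exploit the fact that $v_1$ is a cut vertex of the underlying graph (so, given $c(v_1)$, the leaf $v_5$ and the path $v_2v_3v_4$ are coloured independently), then sum $\bigl(y+1-|L'(v_5)\cup\{c\}|\bigr)\,h(c)$ over $c\in[y+1]$. I verified that your $h(1)=A_3(y)=y^3-2y^2+3y-1$, $h(y+1)=y^3-4y^2+6y-3$, $h(c)=y^3-4y^2+8y-6$ for $c\in\{2,\dots,y\}$, and that assembling these with the $v_5$-multiplicities $y,y-1$ (part~(a)) resp.\ $y-1,y-2$ (part~(b)) yields exactly $y^5-4y^4+10y^3-13y^2+10y-3$ and $y^5-5y^4+14y^3-23y^2+23y-11$. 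Your approach is more elementary and transparent for this small graph --- it avoids any sign bookkeeping and makes the structure (cut vertex plus two branches) visible --- while the paper's is more mechanical and reuses the same inclusion--exclusion template it sets up once and applies across several propositions. The only thing lacking in your write-up is the explicit arithmetic for $h$ and the final polynomial expansions, which you flag but do not carry out; they do work out as claimed.
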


\begin{proof}
(a) Let $c_i\in L'(v_i)$ be an arbitrarily chosen forbidden colour for $i \in \{2, 3, 4, 5\}$.
By Corollary~\ref{cor:list colourings} we may assume that $c_2 = c_4 = c_5 = 1$ and that $c_3 = y+1$.
As before, for each $I \subseteq \{2, 3, 4, 5\}$, let $\Sigma_I$ be the number of proper $(y+1)$-colourings of $G$ such that $c(v_i) = c_i$ for each $i \in I$.
By the inclusion-exclusion principle, $Q_{G,L'}(y) \le \sum_{I \subseteq \{2, 3, 4, 5\}} (-1)^{|I|} \Sigma_I$.
Observe that $\Sigma_{\emptyset} = y^5 + y^2$ (see Lemma \ref{lem:chr_poly_cycles}),
$\Sigma_{\{i\}} = y^4 - y^3 + y^2$ for $i \in \{2, 3, 4, 5\}$,
$\Sigma_{\{2, 4\}} = y^3$,
$\Sigma_{\{i, j\}} = y^3 - y^2 + y$ for $i \in \{2, 4\}$ and $j \in \{3, 5\}$,
$\Sigma_{\{3, 5\}} = (y-1)^3 + y^2$,
$\Sigma_{\{2, 3, 4\}} = \Sigma_{\{2, 4, 5\}} = y^2$,
$\Sigma_{\{i, 3, 5\}} = y^2 - y + 1$ for $i\in\{2, 4\}$, and
$\Sigma_{\{2, 3, 4, 5\}} = y$.
Hence
\begin{align*}
\sum_{I \subseteq \{2, 3, 4, 5\}} (-1)^{|I|} \Sigma_I &= y^5 + y^2 - 4(y^4 - y^3 + y^2) + y^3 + 4(y^3 - y^2 + y) \\
&\qquad+ (y-1)^3 + y^2 - 2y^2 - 2(y^2 - y + 1) + y \\[2mm]
&= y^5 - 4y^4 + 10y^3 - 13y^2 + 10y - 3.
\end{align*}

(b) Suppose that $L'(v_5)$ contains distinct colours $c_5$ and $c_6$.
By Corollary~\ref{cor:list colourings} we may assume that $c_2 = c_4 = c_5 = 1$, $c_6 = 2$ and $c_3 = y+1$.
Let $\mI$ be the family of subsets of $\{2, 3, 4, 5, 6\}$ which do not contain $\{5, 6\}$ as a subset.
For each $I \in \mI$ we define $\Sigma_I$ as before, except that if $6 \in I$ then we have $c(v_5) = c_6$.
Then $Q_{G,L'}(y) \le \sum_{I \in \mI} (-1)^{|I|} \Sigma_I$.
We have already calculated $\Sigma_I$ for $I \in \mI$ such that $6 \notin I$.
Observe that
\begin{align*}
&\Sigma_{\{6\}} = y^4 - y^3 + y^2, &
&\Sigma_{\{i, 6\}} = (y-1)(y^2 - y + 1) \text{ for } i = 2, 4, \\
&\Sigma_{\{3, 6\}} = (y-1)^3 + y^2, &
&\Sigma_{\{i, 3, 6\}} = y^2 - 2y + 2 \text{ for } i = 2, 4, \\
&\Sigma_{\{2, 4, 6\}} = y(y-1), \text{ and } &
&\Sigma_{\{2, 3, 4, 6\}} = y - 1.
\end{align*}

\noindent
Hence
\begin{align*}
\sum_{I \in \mI} (-1)^{|I|} \Sigma_I &= (y^5 - 4y^4 + 10y^3 - 13y^2 + 10y - 3) - (y^4 - y^3 + y^2) + 2(y^3 - 2y^2 + 2y - 1) \\
&\qquad+ (y-1)^3 + y^2 - 2(y^2 - 2y + 2) - y(y-1) + y - 1 \\[2mm]
&= y^5 - 5y^4 + 14y^3 - 23y^2 + 23y - 11.
\end{align*}
\end{proof}

\section{Proof of Theorem~\ref{thm:main}}

Fix $x \geq 4$ and let $y=x-1\ge3$.
The proof is by induction, and we consider a minimal counterexample $G$.
If $G$ has two vertices that are coloured the same in every $(y+1)$-colouring, then we identify the two vertices and apply induction. Thus, we may assume that $G$ and any subgraph of $G$ has no such vertices. Moreover,
we may assume that $G$ is vertex-critical for chromatic number 4.
Indeed, if $G$ is not vertex-critical then let $F$ be a minimal $4$-vertex-critical subgraph of $G$.
Then $|F| < |G|$, and by induction $Q_F(y) \leq (y+1)y^{|F| - 3}(y-1)(y-2)$.
Further, since $\delta(F) \geq 2$, equality holds if and only if $F$ is isomorphic to $K_4$.
Now Lemma~\ref{lem:subgraph} implies that $Q_G(y) \leq y^{|G| - |F|}Q_F(y) \leq (y+1)y^{|G| - 3}(y-1)(y-2)$.
Equality holds if and only if $F \simeq K_4$ and $G$ is formed from $F$ by appending vertex-disjoint trees to the vertices of $F$, as desired.

Since $G$ is vertex-critical it has minimum degree at least $3$ and is $2$-connected.
From the latter we may assume that $G$ is edge-critical.
Indeed, deleting an edge of $G$ leaves a connected graph $G'$ with at least as many $(y+1)$-colourings as $G$.
If $\chi(G') = 4$, then we apply the induction hypothesis and conclude that (\ref{eq:3}) holds since $G'$ is 4-critical but different from $K_4$. Thus, $G$ also fulfills (\ref{eq:3}) and thus is not a counterexample.

\begin{claim}\label{claim:twoInduced}
Let $F$ be a non-empty $2$-induced proper subgraph of $G$.
Then there are fewer than $y^{n - |F|}$ ways to $(y+1)$-colour $G - V(F)$.
\end{claim}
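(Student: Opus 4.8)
The plan is to compare the number of colourings of $G$ with what we can say about it using the $2$-induced subgraph $F$, and derive a contradiction if too many colourings of $G - V(F)$ existed. First I would observe that since $F$ is $2$-induced, every vertex $v \in V(G) \sm V(F)$ has at most one neighbour in $F$; hence, given any fixed $(y+1)$-colouring $c$ of $F$, the colour restrictions that $F$ imposes on $G - V(F)$ amount to forbidding at most one colour at each vertex of $G - V(F)$ (namely the colour of its unique neighbour in $F$, if it has one). So the number of extensions of a fixed colouring of $F$ to all of $G$ equals a list chromatic polynomial of $H := G - V(F)$ with forbidden lists of size at most $1$, and the number of ways to $(y+1)$-colour $G - V(F)$ \emph{in a manner compatible with some colouring of $F$} is exactly what we want to bound by $y^{n-|F|}$ (strictly).

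The key inequality is then $Q_G(y) \le Q_F(y) \cdot N$, where $N$ is the number of $(y+1)$-colourings of $G - V(F)$: indeed, every colouring of $G$ restricts to a colouring of $F$ and to a colouring of $G - V(F)$, and the map colouring-of-$G$ $\mapsto$ (its two restrictions) is injective, so $Q_G(y) \le Q_F(y)\cdot N$. Now suppose for contradiction that $N \ge y^{n-|F|}$. Since $G$ is a $4$-critical graph different from $K_4$ (we have $\delta(G)\ge 3$ and $G$ is edge-critical, hence $G \ne K_4$ unless $G = K_4$, which is not a counterexample), and since $F$ is a proper subgraph, $F$ has minimum degree at least... — here I need a lower bound on $Q_F(y)$. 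The cleanest route is: $F$ is an induced subgraph of $G$, which is $4$-critical, so $\chi(F)\le 3$ and in particular $F$ is $(y+1)$-colourable; but I actually need the sharper bound $Q_F(y) > (y+1)y^{|F|-1}\cdots$ — no. Let me instead bound $Q_G(y)$ from below differently: since $G$ is a counterexample, $Q_G(y) > (y+1)y^{n-3}(y-1)(y-2)$. Combining with $Q_G(y) \le Q_F(y)\cdot N$ and the trivial bound $Q_F(y) \le (y+1)^{|F|}$ is too weak; so instead I combine with $N \ge y^{n-|F|}$ being \emph{assumed false}. Restating: we want to prove $N < y^{n-|F|}$, so assume $N \ge y^{n-|F|}$ and derive a contradiction with minimality of $G$.

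The real argument, then, is structural: if $G - V(F)$ had $\ge y^{n-|F|}$ colourings, I would combine this with the extension count. For a \emph{fixed} colouring $c$ of $F$, the number of extensions of $c$ to $G$ is at most $y^{n-|F|}$ by the argument in Lemma \ref{lem:subgraph} (as $G$ is connected, order $V(G)\sm V(F)$ so each vertex sees $F$ or an earlier vertex; at most $y$ choices each). The $2$-induced hypothesis lets me say more: if the ordering can be chosen so that \emph{every} vertex of $G - V(F)$ sees exactly one earlier vertex or one vertex of $F$ (i.e.\ $G - V(F)$ is attached to $F$ as a forest), then $G$ would be obtained from $F$ by appending trees, contradicting either vertex-criticality ($\delta(G)\ge 3$ forces leaves to be absent) or the structure — so some vertex of $G - V(F)$ has two neighbours among $F \cup$ earlier vertices, and then for every colouring of $F$ there are \emph{strictly} fewer than $y^{n-|F|}$ extensions, as in the `moreover' part of Lemma \ref{lem:subgraph}. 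Summing over colourings of $F$ gives $Q_G(y) < Q_F(y)\, y^{n-|F|}$; but independently, by counting colourings of $G$ as (colouring of $G - V(F)$, compatible extension over $V(F)$)... Actually the clean finish: $Q_G(y) = \sum_{c} (\text{extensions of } c) \le Q_F(y)\cdot(\max_c \#\text{extensions})$, which does not directly bound $N$.

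\medskip

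\noindent\textbf{Cleaner plan.} I would argue directly: the number of pairs $(c_F, c_H)$ where $c_F$ is a $(y+1)$-colouring of $F$, $c_H$ is a $(y+1)$-colouring of $H = G - V(F)$, and $c_F \cup c_H$ is a proper colouring of $G$, equals $Q_G(y)$ on one hand, and on the other hand, for each fixed $c_H$, the number of valid $c_F$ is at most $Q_F(y)$; but more usefully, for each fixed $c_H$ there is \emph{at least one} valid $c_F$ only if $c_H$ extends, and the count of such $c_H$ — call it $N'$ — satisfies $N' \le N$. The genuinely useful direction is: $Q_G(y) \le (\text{number of colourings of } H) \cdot \max_{c_H}(\text{number of } c_F \text{ extending } c_H)$, and by the $2$-induced property plus connectivity and $\delta(G)\ge 3$, one shows $\max_{c_H}(\#c_F) \le (y+1)y^{|F|-3}(y-1)(y-2) / y^{?}$...

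The honest summary is: \emph{The main obstacle is getting the inequality to point the right way.} The clean resolution, which I would pursue, is to note $Q_G(y) \ge$ (the counterexample lower bound) $(y+1)y^{n-3}(y-1)(y-2)$, while simultaneously $Q_G(y) \le N \cdot M$ where $M := \max_{c_H} \#\{c_F\}$ is at most $Q_F(y)$ and, because $F$ is a proper induced subgraph of the $4$-critical $G$ with $\delta(G)\ge 3$, one can bound $M$ strictly below $(y+1)y^{|F|-3}(y-1)(y-2)$ by applying the induction hypothesis to a suitable auxiliary graph (a clique sum of $F$ with a triangle, or $F$ itself plus an apex), using that $F \ne G$ so this auxiliary graph is smaller than $G$. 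Plugging in then forces $N > y^{n-|F|}$ to be \emph{impossible}, giving the claim. I expect the delicate point to be the choice of auxiliary graph certifying $M < (y+1)y^{|F|-3}(y-1)(y-2)$: it must be $4$-chromatic, connected, not of extremal type, and of order $< n$; the natural candidate is $G$ itself with $V(F)$ replaced by a single new structure, but verifying that it is $4$-chromatic and non-extremal is where the $2$-induced hypothesis and vertex-criticality of $G$ must be invoked carefully.
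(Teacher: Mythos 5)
There is a genuine gap, and you essentially acknowledge it yourself ("the main obstacle is getting the inequality to point the right way"). All your attempted routes involve $Q_G(y)$ and how colourings of $H := G - V(F)$ extend over $F$, but the claim is a statement purely about $H$; the interaction with $F$ is irrelevant. The $2$-induced hypothesis is exploited in the paper not to control list-extensions but to control the \emph{local structure of $H$}, component by component. The two observations you are missing are: (i) since $F$ is $2$-induced and $\delta(G) \ge 3$, every component $C$ of $H$ has $\delta(C) \ge 2$, because each vertex of $C$ loses at most one neighbour to $F$ and all its other neighbours lie in $C$; (ii) since $F$ is $2$-induced and $G$ is $4$-vertex-critical, no component $C$ of $H$ can be $2$-choosable: take a $3$-colouring of $G - V(C)$ (exists by criticality), note each vertex of $C$ has at most one forbidden colour from $F$ and hence a list of size $\ge 2$, and if $C$ were $2$-choosable this would extend to a $3$-colouring of $G$. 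In particular $C$ is not an even cycle. If $C$ is an odd cycle, Lemma \ref{lem:chr_poly_cycles} gives $Q_C(y) = y^{|C|} - y < y^{|C|}$; otherwise $C$ has $\delta(C) \ge 2$ and is not a cycle, so Lemma \ref{lem:delta2} gives $Q_C(y) \le y^{|C|-4}(y^4 - y^3 + y^2 + 2y - 1) < y^{|C|}$ for $y \ge 3$. Multiplying over components yields the claim directly, with no reference to $Q_G$, to extensions across $F$, or to any auxiliary graph.

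Your closing suggestion of building a smaller $4$-chromatic auxiliary graph to certify a bound on extension counts is unnecessary and also under-specified: you would need to verify $4$-chromaticity and non-extremality of the auxiliary graph, which is a substantial further task. The paper avoids this entirely by shifting the argument from colouring-extension counting to a structural analysis of the components of $G - V(F)$.
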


To prove the claim, we first observe that no component $C$ of $G - V(F)$ can be $2$-choosable
-- otherwise we could extend a $3$-colouring of $G - V(C)$ to $C$ -- and that $\delta(C) \geq 2$.
In particular, $C$ cannot be an even cycle.
If $C$ is an odd cycle, Lemma \ref{lem:chr_poly_cycles} (combined with Lemma \ref{lem:subgraph}) completes the proof of the claim; otherwise, Lemma \ref{lem:delta2} does the same.

\medskip

The rest of the proof of Theorem \ref{thm:main} splits into several cases.
Each case implictly excludes all earlier cases.

\medskip

\Case{1}{$G$ has $7$ or fewer vertices.}

In this case there are only four graphs to consider, and one of them is $K_4$.
We just compute their chromatic polynomials to confirm the theorem. The details are left to the reader.

\medskip

\Case{2}{$G$ contains $K_4^-$ as a subgraph.}

Let $F$ be a copy of $K_4^-$ in $G$.
We may assume that $F$ is an induced subgraph of $G$.
Let $x_1$ and $x_2$ be the vertices of degree $2$ in $F$ and let $x_3$ and $x_4$ be the vertices of degree $3$ in $F$.
We form a graph $G'$ from $G$ by identifying $x_1$ and $x_2$.
Observe that $\chi(G') \geq 4$ and that $|E(G')| \geq |G'| + 2$.

Suppose first that $F$ is not $2$-induced in $G$;
let $v \in V(G) \sm V(F)$ have two neigbours in $V(F)$.
We first count colourings in which $x_1$ and $x_2$ have the same colour;
this is equivalent to counting colourings of $G'$.
Since $\chi(G') \geq 4$, by induction $G'$ has at most $(y+1)y^{n-4}(y-1)(y-2)$ $k$-colourings.
On the other hand, the number of colourings in which $x_1$ and $x_2$ have different colours is equal to the number of $(y+1)$-colourings of the graph $G^*$
formed from $G$ by adding the edge $x_1$ and $x_2$, creating a copy of $K_4$.
Let $F^* = G^*[V(F) \cup \{v\}]$.
If $v$ has more than $2$ neighbours in $V(F)$ then Lemma~\ref{lem:subgraph} applied with $G = G^*$ and $F = F^*$ implies that $Q_{G^*}(y) \leq y^{n - 5}Q_{F^*}(y) < (y+1)y^{n-4}(y-1)^2(y-2)$.
On the other hand, if $v$ has exactly $2$ neighbours in $V(F)$ then Lemma~\ref{lem:subgraph} applied with $G = G^*$ and $F = F^*$ implies that $Q_{G^*}(y) < y^{n - 5}Q_{F^*}(y) = (y+1)y^{n-4}(y-1)^2(y-2)$;
the inequality is strict since no pair of vertices of $F^*$ has the same colour in every $(y+1)$-colouring of $F^*$. In total we have fewer than
$$(y+1)y^{n-4}(y-1)(y-2) + (y+1)y^{n-4}(y-1)^2(y-2) = (y+1)y^{n-3}(y-1)(y-2)$$
ways to $(y+1)$-colour $G$, as desired. This completes the proof when $F$ is not 2-induced.

Next suppose that neither $x_3$ nor $x_4$ has a neighbour outside $F$.
Let $G'' = G' - \{x_3, x_4\}$.
Observe that $\chi(G'') \geq 4$, as if $G''$ were $3$-colourable we could extend a $3$-colouring of $G''$ to a $3$-colouring of $G'$,
contradicting the fact that $\chi(G') \geq 4$.
By induction $G''$ has at most $(y+1)y^{n-6}(y-1)(y-2)$ $(y+1)$-colourings,
and hence $G'$ has at most $(y+1)y^{n-5}(y-1)^2(y-2)$~ $(y+1)$-colourings.

We next observe that $G - V(F)$ must be connected.
Indeed, suppose for a contradiction that $G - V(F)$ can be partitioned into subgraphs $C_1$ and $C_2$ with no edges between them.
Then we can $3$-colour $G - V(C_1)$ and $G - V(C_2)$ and permute the colour classes so that the colourings agree on $F$,
obtaining a $3$-colouring of $G$, which is a contradiction.
Hence in $G^*$ there is a path $P$ with $q \geq 1$ internal vertices from $x_1$ to $x_2$ which does not pass through $x_3$ or $x_4$.
Now there are at most $(y+1)y^{n-5}(y-1)(y-2)(y^2 - y + 1)$ ways to $(y+1)$-colour $G^*$: $(y+1)y(y-1)(y-2)$ ways to colour $F+x_1x_2$, at most $(y^{q+1} + (-1)^q)/(y+1) \leq y^q - y^{q-1} + y^{q-2}$ ways to extend to the internal vertices of $P$ (Lemma \ref{lem:paths} applied with $r=q+1$ and $c_1\ne c_2$) and at most $y^{n - q - 4}$ ways to extend to the rest of $G^*$.
Thus there are at most
$$(y+1)y^{n-5}(y-1)^2(y-2) + (y+1)y^{n-5}(y-1)(y-2)(y^2 - y + 1) = (y+1)y^{n-3}(y-1)(y-2)$$
ways to $(y+1)$-colour $G$, as desired.

It remains to show that equality cannot occur. Indeed, if we have as many colourings as estimated, then there are precisely $y^q-y^{q-1}+y^{q-2}$ ways to extend any colouring of $F+x_1x_2$ to $P$ and precisely $y^{n-q+4}$ ways to extend to the rest of $G^*$. In the first case, the equality $(y^{q+1}+(-1)^q)/(y+1) = y^q-y^{q-1}+y^{q-2}$ holds if and only if $q=2$.
However, in that case we see, similarly as in the proof of Lemma \ref{lem:subgraph}, that the number of extensions of some colouring of $F+x_1x_2+P$ to $G^*$ is strictly smaller than $y^{n-q+4}$. This completes the proof when $x_3,x_4$ have no neighbours outside $F$.

Finally suppose that $F$ is $2$-induced in $G$ and that $x_3$ has a neighbour outside $F$.
By Claim~\ref{claim:twoInduced} there are fewer than $y^{n-4}$ ways to $(y+1)$-colour $G - V(F)$.
Let $c_i$ be the colour of one of the neighbours of $x_i$ outside $F$ for every $i = 1,2,3$.
We will use the inclusion-exclusion principle to bound the number of ways to extend colourings to $F$.
For each $I \subseteq [3]$, let $\Sigma_I$ be the number of proper colourings of $F$ such that $c(x_i) = c_i$ for each $i \in I$.

Here and in the sequel we will use the following \emph{truth function} $\tau$.
Given a proposition $A$, we set $\tau(A)=1$ if $A$ is true and $\tau(A)=0$ if $A$ is false.
Then the number of ways of extending to $F$ is
\begin{align}
&\Sigma_{\emptyset} - \Sigma_{\{1\}} - \Sigma_{\{2\}} - \Sigma_{\{3\}} + \Sigma_{\{1, 2\}} + \Sigma_{\{1, 3\}} +\Sigma_{\{2, 3\}} - \Sigma_{\{1, 2, 3\}} \nonumber\\
&=(y+1)y(y-1)^2 - y(y-1)^2 - y(y-1)^2 - y(y-1)^2 \nonumber\\
&\qquad+ \tau(c_1 = c_2) y(y-1) + \tau(c_1 \neq c_2) (y-1)(y-2) +
\tau(c_1 \neq c_3)(y-1)^2 + \tau(c_2 \neq c_3)(y-1)^2 \nonumber\\
&\qquad- \tau(c_1 \neq c_3)\tau(c_2 \neq c_3)(\tau(c_1 = c_2) (y-1) + \tau(c_1 \neq c_2) (y-2)) \nonumber\\
&\le y(y-1)^2 (y-2) + 2(y-1)^2 + \tau(c_1 = c_2) (y-1)^2 + \tau(c_1 \ne c_2) (y-2)^2 \nonumber\\
&\leq y(y-1)^2 (y-2) + 3(y-1)^2 \leq (y+1)y(y-1)(y-2). \label{eq:new4}
\end{align}
Hence in total there are fewer than $(y+1)y^{n-3}(y-1)(y-2)$ ways to $(y+1)$-colour $G$, as desired.

It remains to consider what happens when we have equality.
Let us observe that the first inequality in the last line of (\ref{eq:new4}) is strict if $c_1\ne c_2$. Thus, it suffices to prove that there is a colouring of $G$ such that $c_1\ne c_2$. First of all, we may select neighbours $u_i$ of $x_i$ for $i=1,2$ to be distinct. Otherwise, the neighbourhood of one of $x_1,x_2$ would be a subset of the neighbourhood of the other one---and this is not possible in a critical graph. Now we can 3-colour $G-V(F)$ and then change the colour of $u_2$ to colour 4, thus achieving that $c_1\ne c_2$.

\medskip

\Case{3}{$G$ contains two disjoint copies of $K_3$.}

Let $F_1$ and $F_2$ be two disjoint copies of $K_3$ in $G$.
Observe that $F_1$ and $F_2$ are $2$-induced in $G$; otherwise, $G$ would contain $K_4^-$ as a subgraph. Let $G' = G - V(F_2)$.
Also observe that $\delta(G') \ge2$ (since $\delta(G)\ge3$) and that $G' \ne F_1$ since $|G|\ge 8$. By (\ref{eq:delta2-triangle}), $Q_{G'}(y) \le y^{n-7}(y^4-y^3+y-1)$.

By Proposition \ref{prop:C3,C5,K23}(a), the number of ways of extending any colouring of $G'$ to $F_2$ is at most $y^3 - 3y^2 + 5y - 4$.
Hence $Q_G(y) \leq y^{n-7} (y^4-y^3+y-1)(y^3 - 3y^2 + 5y - 4) < (y+1) y^{n-3} (y-1)(y-2)$ for $y \geq 3$, as desired.
The last inequality holds since when the substitution $y = z + 3$ is made, the difference of the RHS and LHS has only non-negative coefficients.

\medskip

\Case{4}{$G$ contains a copy of $K_3$.}

Let $F_1$ be a copy of $K_3$ in $G$.
We may assume that $V(G) \sm V(F_1)$ contains at least $5$ vertices and has minimum degree at least $2$.
There are $y(y^2 - 1)$ ways to $(y+1)$-colour $F_1$.
We claim that we can colour almost all of the remaining vertices of $G$ in such a way that each vertex has at least one already-coloured neighbour when we colour it
(and hence there are at most $y$ ways to colour it),
so that the vertices which remain at the end form an induced cycle.
Indeed, if the minimum degree of the remaining vertices is at least $3$ then we can colour any uncoloured vertex.
Otherwise, let $V'$ be the set of uncoloured vertices and let $v \in V'$ so that $|N(v) \cap V'| = 2$.
If $G[V' \sm \{v\}]$ is acyclic, then $G[V']$ is unicyclic and since it has minimum degree at least $2$, it is a cycle, as desired.
Otherwise, we colour $v$ and recursively colour any vertex with only one neighbour in $V'$.

After this process, if the cycle $C$ induced on $V'$ has length $5$ then we let $F_2$ be this cycle. If it has length more than 5, then
we proceed to colour all of $V'$ except for a path on $5$ vertices, and let $F_2$ be this path.
Otherwise, the cycle $C$ has length $4$. In this case we uncolour the most recently coloured vertex $u\notin F_1\cup C$ that has a neighbour in $C$. Since $|G|>7$ and $G-V(F_1)$ is connected, $u$ exists. Since $G-u$ is connected, the order of colouring can be changed so that all vertices in $G-(V(C)\cup\{u\})$ are coloured first in the desired way. Now, let $F_2$ be the graph induced on $V(C)\cup\{u\}$.
In this case $F_2$ is either $C_4$ plus a leaf or $K_{2, 3}$.

\medskip

\Case{4a}{$F_2$ is a $4$-cycle $(v_1, v_2, v_3, v_4)$ plus a leaf $v_5$ which is adjacent to $v_1$.}

Suppose that $N(v_5) \sm \{v_1\}$ is monochromatic.
We consider the number of ways of extending the colouring to $F_2$.
By Proposition \ref{prop:C4 plus leaf}(a), each such colouring has at most
$y^5 - 4y^4 + 10y^3 - 13y^2 + 10y - 3$ extensions.

Now suppose that $N(v_5) \sm \{v_1\}$ is not monochromatic.
By part (b) of the same proposition, the number of extensions is at most
$y^5 - 5y^4 + 14y^3 - 23y^2 + 23y - 11$.

Let $G' = G - V(F_2)$, let $S = N(v_5) \sm \{v_1\}$ and let $G'' = G' / S$.
Then the number of $(y+1)$-colourings of $G'$ in which $S$ is not monochromatic is $Q_{G'}(y) - Q_{G''}(y)$.
(If $S$ is not an independent set then we consider $G''$ to have a loop and so $Q_{G''}(y) = 0$.)
Now the number of ways to colour $G$ is at most
\begin{align*}
&(y^5 - 4y^4 + 10y^3 - 13y^2 + 10y - 3)Q_{G''}(y) \\
&\qquad+ (y^5 - 5y^4 + 14y^3 - 23y^2 + 23y - 11)(Q_{G'}(y) - Q_{G''}(y)) \\
&= (y^5 - 5y^4 + 14y^3 - 23y^2 + 23y - 11)Q_{G'}(y) + (y^4 - 4y^3 + 10y^2 - 13y + 8)Q_{G''}(y) \\
&\leq (y^5 - 5y^4 + 14y^3 - 23y^2 + 23y - 11)(y+1)y^{n-7}(y-1) \\
&\qquad+ (y^4 - 4y^3 + 10y^2 - 13y + 8)(y+1)y^{n-8}(y-1) \\
&= (y+1)y^{n-8}(y-1)(y^6 - 5y^5 + 15y^4 - 27y^3 + 33y^2 - 24y + 8) \\
&< (y+1)y^{n-3}(y-1)(y-2),
\end{align*}
where the first inequality follows from Lemma~\ref{lem:subgraph} applied with $F = F_1$ and $G = G'$ and the last inequality holds since when the substitution $y = z + 3$ is made, the difference of the RHS and LHS has only positive coefficients.

\medskip

\Case{4b}{$F_2$ is isomorphic to $K_{2,3}$.}

Let $v_1, v_2$ be the vertices of $F_2$ of degree $3$ and let $v_3, v_4, v_5$ be the vertices of $F_2$ of degree $2$.
Since $G$ is $4$-critical, $v_1$ and $v_2$ must each have a neighbour outside $F_2$. This means that each vertex has at least one forbidden colour and we can apply Proposition \ref{prop:C3,C5,K23}(b) to conclude that the number of extensions is at most
\begin{align*}
y^5 - 6y^4 + 21y^3 -38y^2 + 36y - 13 < y^4 (y-2),
\end{align*}
where the inequality holds since when the substitution $y = z + 3$ is made, the difference of the RHS and LHS has only positive coefficients.
It follows immediately that $Q_G(y) < (y+1)y^{n-3}(y-1)(y-2)$.

\medskip

\Case{4c}{$F_2$ is isomorphic to $P_5$.}

We order the vertices of $F_2$ naturally as $v_1, \ldots, v_5$.
The number of extensions will depend on whether the coloured neighbourhoods of $v_1$ and $v_5$ are monochromatic.
If both neighbourhoods are monochromatic, the number of extensions is at most ${A}_5(y) = y^5 - 4y^4 + 10y^3 - 13y^2 +10y - 3$ (see Table \ref{tab:1}).
If exactly one of $v_1$ and $v_5$ has a monochromatic coloured neighbourhood, number of extensions is at most ${B}_5(y) = y^5 - 5y^4 + 14y^3 - 22y^2 + 20y - 8$ (see Table \ref{tab:1}).
Finally, if neither $v_1$ nor $v_5$ has a monochromatic coloured neighbourhood, the number of extensions is at most $\widehat{C}_5(y) = y^5-6y^4+19y^3-34y^2+33y-13$ (see~(\ref{eq:C_5(y)})).

Let $G' = G - V(F_2)$ and let $S_1$ and $S_5$ be the coloured neighbourhoods of $S_1$ and $S_5$ respectively.
Let $G_1 = G' / S_1$, $G_5 = G' / S_5$ and $G_{15} = G' / (S_1 \cup S_5)$.
Then the number of $(y+1)$-colourings of $G'$ in which neither $S_1$ nor $S_5$ is monochromatic is $Q_{G'}(y) - Q_{G_1}(y) - Q_{G_5}(y) + Q_{G_{15}}(y)$.
The number of $(y+1)$-colourings of $G'$ in which $S_1$ (respectively, $S_5$) but not $S_5$ (respectively, $S_1$) is monochromatic is $Q_{G_1}(y) - Q_{G_{15}}(y)$ (respectively, $Q_{G_5}(y) - Q_{G_{15}}(y)$).
Now the number of ways to colour $G$ is at most
\begin{align*}
&(y^5 - 4y^4 + 10y^3 - 13y^2 +10y - 3)Q_{G_{15}} \\
&\qquad+ (y^5 - 5y^4 + 14y^3 - 22y^2 + 20y - 8)(Q_{G_1}(y) - Q_{G_{15}}(y)) \\
&\qquad+ (y^5 - 5y^4 + 14y^3 - 22y^2 + 20y - 8)(Q_{G_5}(y) - Q_{G_{15}}(y)) \\
&\qquad+ (y^5 - 6y^4 + 19y^3 - 34y^2 + 33y - 13)(Q_{G'}(y) - Q_{G_1}(y) - Q_{G_5}(y) + Q_{G_{15}}(y)) \\
&= (y^3 - 4y^2 + 8y - 6)Q_{G_{15}} + (y^4 - 5y^3 + 13y^2 - 18y + 11)Q_{G_1}(y) \\
&\qquad+ (y^4 - 5y^3 + 13y^2 - 18y + 11)Q_{G_5}(y) + (y^5 - 6y^4 + 19y^3 - 34y^2 + 33y - 13)Q_{G'}(y) \\
&\leq (y+1)y^{n-8}(y-1)((y^3 - 4y^2 + 8y - 6) + (y^4 - 5y^3 + 13y^2 - 18y + 11) \\
&\qquad+ (y^4 - 5y^3 + 13y^2 - 18y + 11)
+ (y^5 - 6y^4 + 19y^3 - 34y^2 + 33y - 13)y) \\
&= (y+1)y^{n-8}(y-1)(y^6 - 6y^5 + 21y^4 - 43y^3 + 55y^2 - 41y + 16) \\
&< (y+1)y^{n-3}(y-1)(y-2),
\end{align*}
where the first inequality follows from Lemma~\ref{lem:subgraph} applied with $F = F_1$ and $G = G'$ and the last inequality holds since when the substitution $y = z + 3$ is made, the difference of the RHS and LHS has only positive coefficients.

\medskip

\Case{4d}{$F_2$ is a copy of $C_5$.}

We apply Proposition \ref{prop:C3,C5,K23}(c) to conclude that the number of extensions is at most
$y^5 - 5y^4 + 14y^3 - 23y^2 + 21y - 9 \leq y^5 - 5y^4 + 15y^3 - 28y^2 + 31y - 16 < y^4(y-2)$,
where the inequality holds since when the substitution $y = z + 3$ is made, the difference of the RHS and LHS has only positive coefficients.
It follows immediately that $Q_G(y) < (y+1)y^{n-3}(y-1)(y-2)$.

\medskip

\Case{5}{$G$ contains a $2$-induced $5$-cycle, a 2-induced $7$-cycle or a 2-induced $P_6$.}

We may assume that $G$ is $K_3$-free. Let $F$ be the considered 2-induced subgraph isomorphic to $C_5$, $C_7$, or $P_6$.
Since $F$ is $2$-induced, each vertex of $G - V(F)$ has at most one neighbour in $F$.
Let $C$ be any component of $G - V(F)$.
By Claim \ref{claim:twoInduced}, $C$ has fewer than $y^{|C|}$ $(y+1)$-colourings.
We colour each component and we have at most $y^{|G|-|F|}$ colourings. If $F=C_7$, we also colour one of the vertices of $F$ (for each colouring, there are at most $y$ ways to do so) and now we consider the remaining induced path $P_6$ as $F$.

Now, if $F=P_6$, we proceed in a similar way as in Case 4c, but in each of the auxiliary graphs $G'' \in \{G_1, G_5, G_{15}\}$ we identify only a single pair of vertices from the appropriate neighbourhood.
If these vertices are in different components of $G - V(F)$ then $G''$ still has at most $y^{|G'|}$ colourings.
If the vertices are in the same component $C$ of $G - V(F)$ then the resulting component $C'$ remains connected and contains a cycle, so it has at most $y^{|C'|} + y^{|C'| - 3}$ (with equality if and only if $C'$ is unicyclic and contains a $4$-cycle).
Therefore we obtain that
\begin{align*}
Q_G(y) &\le y^{n-9}(y^3 + 1)(y^6 - 6y^5 + 21y^4 - 45y^3 + 55y^2 - 41y + 16) \\
&< (y+1)y^{n-3}(y-1)(y-2).
\end{align*}

If $F=C_5$, then we proceed in the same way as in Case 4d to obtain:
$$
   Q_G(y) \le y^{n-5}(y^5 - 5y^4 + 15y^3 - 28y^2 + 31y - 16) < (y+1)y^{n-3}(y-1)(y-2),
$$
where in each case the last inequality holds since when the substitution $y = z + 3$ is made, the difference of the RHS and LHS has only positive coefficients.
\medskip

\Case{6}{None of the previous cases apply.}

Let $F'$ be a shortest odd cycle in $G$; then $F'$ is an induced cycle in $G$.
We may assume that $F'$ has length at least $5$, since $G$ is $K_3$-free and that is not $2$-induced (since otherwise we would have a 2-induced $C_5$, $C_7$ or $P_6$).
Let $v$ be a vertex with two neighbours on $F'$.
Then $v$ cannot have three or more neighbours on $F'$, or two neighbours whose distance along $F'$ is greater than $2$,
or two adjacent neighbours;
otherwise, there would be a shorter odd cycle.
So $v$ has two neighbours on $F'$ at distance exactly $2$.
Let $v'$ be the vertex of $F'$ which is adjacent to both neighbours and let $F = G[V(F') \cup \{v\}]$.
Let $v_1, \ldots, v_{|F'|-1}$ be the vertices of the path $F'-v'$ in the natural order.

We claim that $|V(G) \sm V(F)| \geq 5$.
Indeed, if $|F'| = 5$ then this follows from the fact that a $K_3$-free $4$-chromatic graph must have at least $11$ vertices \cite{WuZhang08}.
On the other hand, if $|F'| \geq 11$ then this follows from the fact that all but two vertices of $F'$ have a neighbour outside $F$, while any vertex outside $F$ has at most two neighbours on $F'$.
This leaves the cases $|F'| = 7$ and $|F'| = 9$.
Assuming that there are at most 4 vertices outside $F$, there are at most 8 edges between $F$ and $G-V(F)$.
In the case $|F'| = 9$, each vertex of degree $2$ in $F$ has exactly one neighbour outside $F$ (and the vertices of degree $3$ in $F$ have none).
The only way to have pairs of these edges to the four vertices outside is that $v_3$ and $v_5$ have a common neighbour and $v_4$ and $v_6$ have a common neighbour outside $F$.
We colour $v_1$, $v_3$ and $v_5$ with colour $1$, $v_4$, $v_6$ and $v_8$ with colour $2$ and the remaining vertices of $F$ with colour $3$.
Now some vertex of $G - V(F)$ must be adjacent to $v_2$ and either $v$ or $v'$, another to $v_7$ and the other of $v$ and $v'$, a third to $v_3$ and $v_5$ and the last to $v_4$ and $v_6$.
Since any $K_3$-free graph on $4$ vertices is either $C_4$ or a forest, and hence is $2$-choosable, there is a $3$-colouring of $G$, a contradiction.

In the case $|F'| = 7$, consider the $3$-colouring of $F$ in which $v_1$ and $v_3$ have colour $1$, $v_4$ and $v_6$ have colour $2$ and the remaining vertices of $F$ have colour $3$.
There are distinct vertices $w_1$ and $w_2$ of $G - V(F)$ which are adjacent to $v$ but not $v'$, and to $v'$ but not $v$, respectively.
Clearly, $w_1$ and $w_2$ cannot be adjacent (since this would create a $5$-cycle).
Further there are other distinct vertices $w_3$ and $w_4$ which are adjacent to $v_3$ and $v_4$ respectively.
If $w_3$ is not adjacent to a vertex of $F$ which does not have colour $1$, we can extend our $3$-colouring to $G - V(F)$;
this is trivial if $G - V(F)$ is a forest and if it is a $4$-cycle then we colour $w_3$ with colour $3$, $w_4$ with any available colour and $w_1$ and $w_2$ with colour $1$ or $2$ (whichever was not chosen for $w_4$).
Hence $w_3$ is adjacent to $v_5$; similarly, $w_4$ is adjacent to $v_2$.
But now neither $w_1$ nor $w_2$ can be adjacent to $w_3$ or $w_4$, or we would create a $5$-cycle.
From here extending the $3$-colouring to $G - V(F)$ is trivial.
This completes the argument showing that $|G|-|F|\ge5$.

We now observe that the number of ways to $(y+1)$-colour the vertices of $F$ is less than $y^{|F|} - y^{|F| - 1} + y^{|F| - 2}$.
We colour $F$, and then recursively colour remaining vertices one by one, such that each vertex taken has an already-coloured neighbour, until exactly five vertices remain.
Let $F_2$ be the subgraph of $G$ induced on the set of remaining vertices.

\medskip

\Case{6a}{$F_2$ is a graph formed from $C_4$ by appending a leaf.}
By a similar argument as in Case 4a, the number of ways to colour $G$ is less than
\begin{align*}
&(y^5 - 5y^4 + 14y^3 - 23y^2 + 23y - 11)(y^{n-5} - y^{n-6} + y^{n-7}) \\
&\qquad+ (y^4 - 4y^3 + 10y^2 - 13y + 8)y^{n-6} \\
&= (y^7 - 6 y^6 + 21 y^5 - 46 y^4 + 70 y^3 - 70 y^2 + 42 y - 11)y^{n - 7} < (y+1)y^{n-3}(y-1)(y-2).
\end{align*}
where the inequality holds since when the substitution $y = z + 3$ is made, the difference of the RHS and LHS has only positive coefficients.

\medskip

\Case{6b}{$F_2$ is isomorphic to $C_5$.}
By the same arguments as in Case 4d, we have that the number of ways of extending the colouring to $F_2$ is less than $y^4(y-2)$.
It follows immediately that $Q_G(y) < (y^2 - y + 1)y^{n-3}(y-2) < (y+1)y^{n-3}(y-1)(y-2)$.

\medskip

\Case{6c}{Neither of the previous cases apply.}
Since $F_2$ is triangle-free and not isomorphic to $C_5$, it has an independent set $S$ of size $3$.
We aim to colour two of the vertices of $F_2$, each with an already-coloured neighbour, to leave an independent set $F_3$ of size $3$.

If the two vertices of $F_2 - S$ are adjacent then one of them has degree at most $2$ in $F_2$, since $F_2$ is $K_3$-free. Since $\delta(G)\ge3$, this means that the vertex has an already coloured neighbour.
In this case we colour first the vertex of degree at most $2$ in $F_2$, and then the other, leaving $F_3 = S$.
So we may assume that the two vertices of $F_2 - S$ are not adjacent.
If both have degree at most $2$ in $F_2$, then we colour each of them, leaving $F_3 = S$. If both have degree 3 in $F_2$, then each of them has a neighbour outside $S$ since in a critical graph, the neighbourhood of a vertex cannot be a subset of the neighbourhood of another vertex. By Case 6a, the only remaining possibility is that
one of them, $w$, is adjacent to every vertex of $S$ and the other, $u$, has at most one neighbour, $v$, in $S$.
If $u$ has no neighbour in $S$ then we let $v$ be an arbitrary vertex of $S$.
Now we colour first $v$ and then $w$, leaving $F_3 = (S \cup \{u\}) \sm \{v\}$.

We label the vertices of $F_3$ as $\{v_1, v_2, v_3\}$.
For each $I \subseteq [3]$, let $Q_I$ be the number of colourings of $G - V(F_3)$ in which $N(v_i)$ is monochromatic if and only if $i \in I$.
Let $Q'_I$ be the number of $(y+1)$-colourings of $G - V(F_3)$ in which $N(v_i)$ is monochromatic for each $i \in I$.
Then $Q'_{\emptyset} = Q_{G - V(F_3)}(y) \leq (y^2 - y + 1)y^{n-5}$, while in general $Q'_I$ is the number of $(y+1)$-colourings of the graph formed from $G - V(F_3)$ by identifying $N(v_i)$ into a single vertex, for each $i \in I$.
Since each such graph is connected and contains an odd cycle, $Q'_{I} \leq y^{n-5}$ whenever $I \ne \emptyset$.
Since in a $4$-critical graph no two vertices can be twins, $Q'_{I} \leq y^{n-6}$ for $|I| \geq 2$.

Now the total number of $(y+1)$-colourings of $G$ is at most
\begin{align*}
&(y-1)^3 Q_{\emptyset} + y(y-1)^2 (Q_1 + Q_2 + Q_3) + y^2 (y-1) (Q_{12} + Q_{13} + Q_{23}) + y^3 Q_{123} \\
&= (y-1)^3 Q'_{\emptyset} + (y-1)^2 (Q'_1 + Q'_2 + Q'_3) + (y-1) (Q'_{12} + Q'_{13} + Q'_{23}) + Q'_{123} \\
&\leq (y-1)^3 (y^2 - y + 1) y^{n-5} + 3(y-1)^2 y^{n-5} + 3(y-1) y^{n-6} + y^{n-6} \\
&= (y^6 - 4 y^5 + 7 y^4 - 4 y^3 - 2 y^2 + 5 y - 2) y^{n - 6} < (y+1)y^{n-3}(y-1)(y-2),
\end{align*}
where the last inequality holds since when the substitution $y = z + 3$ is made, the difference of the RHS and LHS has only positive coefficients; and the equality between the first and the second line is the consequence of the identities
$$
    Q'_I = \sum_{J\supseteq I} Q_J \qquad (I\subseteq [3]).
$$
This completes the proof of Theorem \ref{thm:main}.

\section{Conclusion}

Our main result confirms that every 4-chromatic graph $G$ satisfies (\ref{eq:3}) for every positive integer $x$. It is not clear whether the same inequality holds for noninteger values of $x\ge4$; we would be tempted to believe so, but we see no apparent reasons why this would be true. By analysing tight cases in our proof of Theorem \ref{thm:main}, it is evident that in order to prove a statement about nonintegral values, the critical case would likely be when $x$ is close to $4$.

As we show in \cite{KM3}, validity of (\ref{eq:1}) is relatively easy to establish when $G$ has a critical subgraph whose order is much larger than $\chi(G)$.
The proofs used in this paper already indicate that the most important property for bounding the number of colourings is having large minimum degree. The following strengthening of Theorem \ref{thm:main} will be derived in \cite{KM3}:
Every sufficiently large $n$-vertex graph of minimum degree at least\/ $3$ and no twin vertices satisfies:
\begin{equation}\label{eq:strengthening}
    Q_G(y) \le y^{n - c\log n} (y-1)^{c\log n}
\end{equation}
for every integer $y \ge 3$, where $c>0$ is a constant. Observe that (\ref{eq:strengthening}) does not hold for the complete bipartite graph $K_{3,n-3}$, for which $Q_{K_{3,n-3}}(y) > y^{n-2}$, so the condition on excluding twins is needed.

Although we could outline the proof of the above-mentioned result based on arguments used in this paper, the proof is deferred to \cite{KM3}, where we prove a more general version of this statement.

\bibliographystyle{abbrv}

\bibliography{Tomescu_biblio}

\end{document}